\newcommand\MyBox[2]{
  \fbox{\lower0.75cm
    \vbox to 1.7cm{\vfil
      \hbox to 1.7cm{\hfil\parbox{1.4cm}{#1\\#2}\hfil}
      \vfil}%
  }%
}
\renewcommand{\geq}{\geqslant}
\renewcommand{\leq}{\leqslant}
\renewcommand{\L}{\mathcal{L}}
\newcommand{\T}{\mathbb{T}}
\renewcommand{\k}{\kappa}
\renewcommand{\b}{\beta}
\newcommand{\R}{\mathbb{R}}
\newcommand{\Z}{\mathbb{Z}}
\newcommand{\C}{\mathbb{C}}
\newcommand{\N}{\mathbb{N}}
\newcommand{\calM}{\mathcal{M}}
\newcommand{\oneu}{(1,\uparrow)}
\newcommand{\oned}{(1,\downarrow)}
\newcommand{\minu}{(-1,\uparrow)}
\newcommand{\mind}{(-1,\downarrow)}
\newtheorem{theorem}{Theorem}[section]
\newtheorem{lemma}[theorem]{Lemma}
\newtheorem{proposition}[theorem]{Proposition}
\newtheorem{remark}[theorem]{Remark}
\newtheorem{corollary}[theorem]{Corollary}
\newtheorem*{main-theorem}{Main Theorem}
\newtheorem*{remark*}{Remark}
\newtheorem{hypothesis}[theorem]{Hypothesis}
\numberwithin{equation}{section}
\title[Transverse instability in generalized KP equation]{Transverse spectral instability in generalized Kadomtsev-Petviashvili equation}
\author[Bhavna]{Bhavna}
\author[Kumar]{Atul~Kumar}
\author[Pandey]{Ashish~Kumar~Pandey}
\address{Department of Mathematics, IIIT Delhi, India 110020}
\email{bhavnai@iiitd.ac.in, atulk@iiitd.ac.in, ashish.pandey@iiitd.ac.in}
\date{\today}
\begin{document}

\maketitle

\begin{abstract}
   We study transverse stability and instability of one-dimensional small-amplitude periodic traveling waves of a generalized Kadomtsev-Petviashvili equation with respect to two-dimensional perturbations, which are either periodic or square-integrable in the direction of the propagation of the underlying one-dimensional wave and periodic in the transverse direction. We obtain transverse instability results in KP-fKdV, KP-ILW, and KP-Whitham equations. Moreover, assuming the spectral stability of one-dimensional wave with respect to one-dimensional square-integrable periodic perturbations, we obtain transverse stability results in aforementioned equations.
\end{abstract}
\section{Introduction}\label{sec:intro}

We propose the generalized Kadomtsev-Petviashvili (gKP) equation
\begin{equation}\label{e:gkp}
\left( u_t+\calM u_x-u u_x\right)_x+\sigma u_{yy}=0,
\end{equation}
in which $u(x,y,t)$ depends upon the spatial variables $x,y\in\R$, and
the temporal variable $t\in\R$, $\calM$ is a multiplier operator given by the symbol $m(k)$ as 
\begin{equation}\label{e:M}
    \widehat{\calM f}(k)=m(k)\widehat{f}(k),
\end{equation}
and $\sigma$ is equal to either $1$ or $-1$. 
We make following assumptions on $m(k)$.
\begin{hypothesis}\label{h:m} The multiplier symbol $m(k)$ in \eqref{e:M} should satisfy the following.
\begin{enumerate}[label=H\arabic*. , wide=0.5em,  leftmargin=*]
    \item $m$ is real valued, even and without loss of generality, $m(0)=1$,
    \item $C_1 k^\alpha \leq m(k) \leq C_2 k^\alpha$ , 
$ k >> 1$ , $\alpha \geq -1$ and for some $C_1 , C_2 > 0$,
\item $m$ is strictly monotonic for $k>0$.
\end{enumerate}
\end{hypothesis}
Hypotheses H1 and H2 are essential for the proof of the existence of periodic traveling waves, which we discuss later in this section, while H3 is required for stability analysis done in Sections~\ref{sec:perperturb} and \ref{sec:nonperperturb}.

\subsection*{Models} The gKP equation \eqref{e:gkp} is a generalization to the Kadomtsev-Petviashvili (KP) equation \cite{Kadomtsev1970OnMedia}
\begin{equation}\label{e:kp}
\left( u_t-u_{xxx}-uu_x\right)_x+\sigma u_{yy}=0,
\end{equation}
where $\calM=-\partial_x^2$. The KP equation is a natural extension to two spatial dimensions of the
well-known Korteweg-de Vries (KdV) equation
\begin{equation}\label{e:kdv}
u_t-u_{xxx}-uu_x=0.
\end{equation}
The KP equation \eqref{e:kp} with $\sigma=-1$
(negative dispersion) is called the KP-II equation, whereas the one with $\sigma=1$
(positive dispersion) is
called the KP-I equation. Along the same lines, the gKP equation can be thought of as an extension to two spatial dimensions of the equation
\begin{equation}\label{e:gW}
     u_t+\calM u_x-u u_x=0.
\end{equation}
The gKP equation in the form \eqref{e:gkp} first appears in \cite{Saut1995RecentEquations} where the existence and properties of its localized solitary waves were studied. For different values of $\calM$ or $m(k)$, \eqref{e:gW} reduces to various well-known equations like $m(k) = 1+|k|^\beta$ ($\beta > 1)$ is the Fractional KdV (fKdV) equation, $m(k) = 1+|k|$ is the Benjamin-Ono (BO) equation, $m(k)=k\coth k$ is the Intermediate Long wave (ILW) equation and $m(k)=\sqrt{\tanh k/k}$ is the Whitham equation. For $m(k)=1+|k|^\beta$, $1+|k|$, $k\coth k$, and $\sqrt{\tanh k/k}$, we name the gKP equation as KP-fKdV, KP-BO, KP-ILW, and KP-Whitham, respectively. We add -I or -II to the name if $\sigma=1$ or $-1$, respectively.  

\subsection*{Dispersion relation} Assuming a plane-wave solution of the form
\[
u(x,t)=e^{i(kx-\omega t+\ell y)}
\]
for the linear part
\[
\left( u_t+\calM u_x\right)_x+\sigma u_{yy}=0
\]
of the gKP equation \eqref{e:gkp}, we arrive at the dispersion relation given by the phase velocity
\begin{align}\label{e:dispersion}
    v_p(k)=\frac{\omega}{k}=m(k)+\sigma \frac{\ell^2}{k^2}.
\end{align}
From \eqref{e:dispersion}, we observe that phase velocity $v_p$ is monotonic if $\sigma=1$ and $m$ is decreasing or $\sigma=-1$ and $m$ is increasing, while in other two combinations it changes its behavior along local extremum.



\subsection*{Small amplitude periodic traveling waves} Seeking $y$-independent traveling wave solution of \eqref{e:gkp} of the form 
$u(x,y,t)=U(x-ct)$, where $c\in\R$ is the speed of propagation, then $U$ satisfies the following
\[
\left(-c U^\prime+\calM U^\prime-U U^\prime\right)^\prime=0.
\]
Integrating this, we get
\[
\calM U= cU +\frac{U^{2}}{2}+b+dx,
\]
in which $b$ and $d$ are arbitrary constants. Since we are interested in periodic solutions, we can set $d=0$ and the equation becomes
\begin{equation}\label{e:onedtw}
\calM U= cU +\frac{U^{2}}{2}+b.
\end{equation}
Let $U$ be a $2\pi/k$-periodic function in $x$. Then, $w(z):=U(x)$ with $z:=kx$ is a $2\pi$-periodic function in $z$. For each $k>0$, a family of small amplitude $2\pi$-periodic and smooth solutions $w(k,a,b)(z)$ exists at $c=c(k,a,b)$, see \cite[Proposition~2.2]{Hur2015ModulationalWaves} for more details. Moreover,
\begin{equation}\label{e:expptw}
\left\{
\begin{aligned}
w(k,a,b)(z) =&(1-m(k))b +a\cos z+a^2(A_0+A_2\cos 2z)+a^3A_3\cos 3z+O(a^4+ab+b^2),\\
\quad c(k,a,b)=&m(k)-(1-m(k))b+a^2 c_2+O(a^4+ab+b^2),
\end{aligned}
\right.
\end{equation}
where
\begin{equation}
 A_0=\frac{1}{4(1-m(k))}, \quad
    A_2=\frac{1}{4(m(2k)-m(k))}, \quad
    A_3=\dfrac{A_2}{2(m(3k)-m(k))}\text{ and }
    c_2=-A_0-\frac{A_2}{2}. \label{e:A_0A2c2}
\end{equation}
Hypotheses~\ref{h:m} H1 and H2 are used to prove the existence of $w(k,a,b)(z)$ and $c(k,a,b)$, see \cite[Proposition~2.2]{Hur2015ModulationalWaves} for the proof. Starting now, we denote $w(k,a,b)$ and $c(k,a,b)$ as $w$ and $c$ respectively. 

\subsection*{Transverse instability} Clearly, a solution of \eqref{e:gW} is a $y$-independent solution of the gKP equation \eqref{e:gkp}. The stability (or instability) of such a one-dimensional solution of \eqref{e:gkp} with respect to perturbations which are two-dimensional is generally termed as {\em transverse stability (or instability)}.\footnote{The definition of transverse stability can be different in different articles depending on what is the nature of underlying stability analysis, for example, orbital or spectral stability.} The transverse instability of solitary waves of the KdV in the KP equation was first conducted by Kadomtsev, and Petviashvili \cite{Kadomtsev1970OnMedia}, where it was found that such solutions are stable to transverse perturbations in the case of negative dispersion $(\sigma = -1)$, while they are unstable to long-wavelength transverse perturbations in the case of positive dispersion ($\sigma=1$) even though they are stable in the corresponding one-dimensional problem. The transverse stability of cnoidal wave solutions of KdV in the KP equation has been studied in \cite{MD88} where authors obtain some instability results for KP-I equation and prove transverse stability for KP-II equation.
Johnson and Zumbrun \cite{Johnson2010TransverseEquation} have studied transverse instability of periodic waves for the KP-gKdV equation, with respect to periodic perturbations in the direction of propagation and of long wavelength in the transverse direction. They have constructed an orientation index by comparing the low and high-frequency behavior of the periodic Evans functions. Mariana Haragus \cite{Haragus2011TransverseEquation} has also studied the transverse stability of KP-KdV equations, but the author restricted to the case of small periodic waves and considered transverse stability for more general perturbations for the KP-KdV equation. Recently 
in \cite{HLP17}, authors have proved transverse spectral stability of one-dimensional periodic traveling waves of KP-II equation with respect to two-dimensional perturbations which are bounded in the direction of propagation of wave.
Transverse instability of periodic waves of KP-I and Schr\"{o}dinger equations have been studied in \cite{HSS}. Transverse instability of solitary wave solutions of various water-wave models have also been explored by several authors, see \cite{GHS,PS,RT,RT1}.

\bigskip

In this article, we study transverse spectral stability of $y$-independent solution $u(x,y,t)=w(k(x-ct))$, where $w$ and $c$ are given in \eqref{e:expptw}, of \eqref{e:gkp} with respect to two-dimensional perturbations which are either periodic or non-periodic in the $x$-direction and always periodic in the $y$-direction. If the perturbation is periodic in the $x$-direction then it is co-periodic with the solution. If the perturbation is non-periodic in the $x$-direction then it is square-integrable on the whole real line. The periodic nature of the perturbation in the $y$-direction is classified into two categories: short or finite wavelength and long-wavelength perturbations. 

Our main results are following theorems depicting the transverse stability and instability of small amplitude periodic traveling waves \eqref{e:expptw} of \eqref{e:gkp} depending upon the nature of the two-dimensional perturbation in $x$- and $y$-directions.
\begin{theorem}[Transverse stability]\label{t:2}
Assume that small amplitude periodic traveling waves \eqref{e:expptw} of \eqref{e:gkp} are spectrally stable in $L^2(\mathbb T)$ as a solution of the corresponding $y$-independent one-dimensional equation. Then, for any $a$ sufficiently small, $k>0$, and $m$ satisfying Hypotheses~\ref{h:m}, periodic traveling waves \eqref{e:expptw} of \eqref{e:gkp} are transversely stable with respect to two-dimensional perturbations which are periodic in the direction of propagation of the wave and of
\begin{enumerate}
    \item finite and short wavelength in the transverse direction if $\sigma=1$ with monotonically increasing $m(k)$ and $\sigma=-1$ with monotonically decreasing $m(k)$.
    \item long wavelength in the transverse direction if $\sigma=1$ with monotonically decreasing $m(k)$ and $\sigma=-1$ with monotonically increasing $m(k)$.
\end{enumerate}

\end{theorem}

\begin{theorem}[Transverse instability]\label{t:1}
For any $a$ sufficiently small, $k>0$, and $m$ satisfying Hypotheses~\ref{h:m}, periodic traveling waves \eqref{e:expptw} of \eqref{e:gkp} are transversely unstable with respect to two-dimensional perturbations which are 
\begin{enumerate}
    \item periodic in the direction of propagation of the wave and of long wavelength in the transverse direction if $\sigma=1$ with monotonically increasing $m(k)$ and $\sigma=-1$ with monotonically decreasing $m(k)$.
    \item non-periodic (localized or bounded) in the direction of propagation of the wave and of finite wavelength in the transverse direction if $\sigma=1$ with monotonically increasing $m(k)$ and $\sigma=-1$ with monotonically decreasing $m(k)$.
\end{enumerate}

\end{theorem}


Consequently, by applying these theorems, we obtain transverse stability and instability results for KP-fKdV-I, KP-fKdV-II, KP-ILW-I, KP-ILW-II, KP-Whitham-I, and KP-Whitham-II equations conditioned on the spectral stability of periodic traveling waves with respect to one-dimensional perturbations.
\begin{corollary}[Transverse stability vs. instability of KP-fKdV]\label{c:fkdv}
For any $a$ sufficiently small and $k>0$, 
\begin{enumerate}
\item 
\begin{enumerate}
    \item periodic traveling waves \eqref{e:expptw} of the KP-fKdV-I equation are transversely stable with respect to two-dimensional perturbations, which are periodic in the direction of propagation of the wave and of finite and short wavelength in the transverse direction.
    \item periodic traveling waves \eqref{e:expptw} of the KP-fKdV-II equation are transversely stable with respect to two-dimensional perturbations, which are periodic in the direction of propagation of the wave and of long wavelength in the transverse direction. 
\end{enumerate}
\item  periodic traveling waves \eqref{e:expptw} of KP-fKdV-I equation are transversely unstable with respect to two-dimensional perturbations, which are 
    \begin{enumerate}
        \item periodic in the direction of propagation of the wave and of long-wavelength in the transverse direction, and
        \item non-periodic in the direction of propagation of the wave and of finite wavelength in the transverse direction.
    \end{enumerate}
\end{enumerate}
\end{corollary}
\begin{corollary}[Transverse stability vs. instability of KP-ILW]\label{c:ilw}
For any $a$ sufficiently small and $k>0$, 
\begin{enumerate}
\item 
\begin{enumerate}
    \item periodic traveling waves \eqref{e:expptw} of the KP-ILW-I equation are transversely stable with respect to two-dimensional perturbations, which are periodic in the direction of propagation of the wave and of finite and short wavelength in the transverse direction.
    \item periodic traveling waves \eqref{e:expptw} of the KP-ILW-II equation are transversely stable with respect to two-dimensional perturbations, which are periodic in the direction of propagation of the wave and of long wavelength in the transverse direction. 
\end{enumerate}
\item  periodic traveling waves \eqref{e:expptw} of KP-ILW-I equation are transversely unstable with respect to two-dimensional perturbations, which are 
    \begin{enumerate}
        \item periodic in the direction of propagation of the wave and of long-wavelength in the transverse direction, and
        \item non-periodic in the direction of propagation of the wave and of finite wavelength in the transverse direction.
    \end{enumerate}
\end{enumerate}
\end{corollary}

\begin{corollary}[Transverse stability vs. instability of KP-Whitham]\label{c:whitham}
For any $a$ sufficiently small and $k>0$, 
\begin{enumerate}
\item 
\begin{enumerate}
    \item periodic traveling waves \eqref{e:expptw} of the KP-Whitham-II equation are transversely stable with respect to two-dimensional perturbations, which are periodic in the direction of propagation of the wave and of finite and short wavelength in the transverse direction.
    \item periodic traveling waves \eqref{e:expptw} of the KP-Whitham-I equation are transversely stable with respect to two-dimensional perturbations, which are periodic in the direction of propagation of the wave and of long wavelength in the transverse direction. 
\end{enumerate}
\item  periodic traveling waves \eqref{e:expptw} of KP-Whitham-II equation are transversely unstable with respect to two-dimensional perturbations, which are 
    \begin{enumerate}
        \item periodic in the direction of propagation of the wave and of long-wavelength in the transverse direction, and
        \item non-periodic in the direction of propagation of the wave and of finite wavelength in the transverse direction.
    \end{enumerate}
\end{enumerate}
\end{corollary}

In Section~\ref{sec:lin}, we linearize the equation and formulate the problem. In Sections~\ref{sec:perperturb} and ~\ref{sec:nonperperturb}, we provide transverse instability analysis that is required to prove our main results, the aforementioned theorems \ref{t:2} and \ref{t:1}. Further, in Section~\ref{sec:app}, we prove these theorems and discuss their applications for KP-fKdV, KP-BO, KP-ILW, and KP-Whitham equations. 



\subsection*{Notations}\label{sec:notations}
The following notations are going to be used throughout the article. Here, $L^2(\mathbb{R})$ denotes the set of real or complex-valued, Lebesgue measurable functions over $\mathbb{R}$ such that
\[
\|f\|_{L^2(\mathbb{R})}=\Big(\int_\R |f(x)|^2~dx\Big)^{1/2}<+\infty \quad 
\]
and, $L^2(\T)$ denote the space of $2\pi$-periodic, measurable, real or complex-valued functions over $\mathbb{R}$ such that
\[
\|f\|_{L^2(\T)}=\Big(\frac{1}{2\pi}\int^{2\pi}_0 |f(x)|^2~dx\Big)^{1/2}<+\infty. 
\]
The space $C_b(\mathbb{R})$ consists of all bounded continuous functions on $\mathbb{R}$, normed with 
\[
\|f\| = \sup_{x\in \mathbb{R}}|f(x)|.
\]
For $s\in \mathbb{R}$, let $H^s(\mathbb{R})$ consists of tempered distributions such that 
\[
\|f\|_{H^s(\mathbb{R})} = \left(\int_{\R}(1+|t|^2)^s|\hat{f}(t)|^2dt\right)^{\frac{1}{2}} < +\infty
\]
and
\[
H^s(\T) =\{f\in H^s(\R)\;:\; f \text{ is } 2\pi\text{-periodic}\}.
\]
We define $L^2(\T)$-inner product as
\begin{equation}\label{def:i-product}
\langle f,g\rangle=\frac{1}{2\pi}\int^{2\pi}_{0} f(z)\overline{g}(z)~dz
=\sum_{n\in\mathbb{Z}} \widehat{f}_n\overline{\widehat{g}_n},
\end{equation}
where $\widehat{f}_n$ are Fourier coefficients of the function $f$ defined by
\[
\widehat{f}_n=\frac{1}{2\pi}\int_0^{2\pi} f(z)e^{inz}~dz.
\]
Throughout the article, $\Re(\lambda)$ represents the real part of $\lambda\in\mathbb{C}$. Further, since the value of $\sigma$ and monotonic nature of the symbol $m$ in \eqref{e:M} appear numerous times, so we use the shorthand notation in Table~\ref{tab:not} for them.

\begin{table}[htbp]
\centering
\begin{tabular}{l|l|c|c|c}
\multicolumn{2}{c}{}&\multicolumn{2}{c}{$m$}&\\
\cline{3-4}
\multicolumn{2}{c|}{}&Increasing&Decreasing&\multicolumn{1}{c}{}\\
\cline{2-4}
\multirow{2}{*}{\rotatebox[origin=c]{90}{$\sigma$}}& $1$ & $\oneu$ & $\oned$ & \\
\cline{2-4}
& $-1$ & $\minu$ & $\mind$ & \\
\cline{2-4}

\end{tabular}
\caption{Notations for different values of $\sigma$ and monotonic nature of $m$.}
\label{tab:not}
\end{table}

\section{Linearization}\label{sec:lin}
Linearizing the gKP equation \eqref{e:gkp} about its one-dimensional periodic traveling wave $w$ in \eqref{e:expptw} and using change of variables, abusing notation, $x\to k z$, $t\to k t$, and $y\to ky$, we arrive at
\begin{equation}\label{e:gkplin}
v_{tz}-c v_{zz}+\calM_k v_{zz}-(w v)_{zz}+\sigma v_{yy}=0,
\end{equation}
For $v(z,y,t)=e^{\lambda t+i\ell y}V(z)$, we obtain
\[
\lambda V_z-c V_{zz}+\calM_k V_{zz}-(w V)_{zz}-\sigma \ell^2 V=0.
\]
which can be rewritten as
\begin{equation}\label{E:opt}
\mathcal T_a(\lambda,\ell) V:=(\lambda\partial_z- \partial_z^2(c-\calM_k+w)-\sigma\ell^2)V=0
\end{equation}
We assume that $2\pi/k$-periodic traveling wave solution $u(x,y,t)=w(k(x-ct))$ of \eqref{e:gkp} is a stable solution of the one-dimensional equation \eqref{e:gW} where $w$ and $c$ are as in \eqref{e:expptw}. We then say that the periodic wave $w$ in \eqref{e:expptw} is transversely spectrally stable with respect to two-dimensional periodic perturbations (resp. non-periodic (localized or bounded perturbations)) if the gKP operator $\mathcal T_a(\lambda,\ell)$ acting in $L^2(\T)$ (resp.  $L^2(\R)$ or $C_b(\R)$)  with domain $H^{\alpha+2}(\T)$  (resp.  $H^{\alpha+2}(\R)$ or   $C_b^{\alpha+2}(\R)$), where $\alpha$ is in Hypothesis~\ref{h:m} H2,
is invertible, for any $\lambda\in\C$, $\Re(\lambda)>0$ and any $\ell\neq 0$.

Depending on the space in which we are studying the invertibility of $\mathcal T_a(\lambda,\ell)$, we split our study into periodic ($L^2(\T)$) and non-periodic perturbations ($L^2(\R)$ or $C_b(\R)$). Also, depending upon the values of $\ell$ we distinguish two different regimes: long-wavelength transverse perturbations, when $|\ell|\ll 1$ and short or finite wavelength transverse perturbations, otherwise.

\section{Periodic Perturbations} \label{sec:perperturb}
In this section, we study transverse stability with respect to two-dimensional perturbations, which are co-periodic in the direction of the propagation of the wave. Therefore, we check if the operator $\mathcal T_a (\lambda,\ell)$ acting in $L^2(\T)$ is invertible, for any $\lambda\in\C$, $\Re(\lambda)>0$ and any $\ell\neq 0$. We reformulate the invertibility problem for this particular case.
\begin{proposition}\label{prop:equiv}
The following statements are equivalent:
\begin{enumerate}
    \item $\mathcal T_a(\lambda,\ell)$ 
acting in $L^2(\T)$ with domain $H^{\alpha+2}(\T)$ is not invertible.
    \item The restriction of $\mathcal T_a(\lambda,\ell)$ to the subspace $L^2_0(\T)$ of $L^2(\T)$ is not invertible, where
    \[
    L^2_0(\T)=\left\{f\in L^2(\T)\;:\;\int_0^{2\pi}f(z)~dz=0\right\}.
    \]
    \item $\lambda$ belongs to the spectrum of the operator $\mathcal A_a(\ell)$ acting in $L^2_0(\T)$ with domain 
$H^{\alpha+1}(\T)\cap L^2_0(\T)$, where
\[
\mathcal A_a(\ell) = \partial_z(c-\calM_k+ w)+\sigma\ell^2\partial_z^{-1}
\]
\end{enumerate}
\end{proposition}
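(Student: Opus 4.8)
The plan is to read off both equivalences from two structural features of $\mathcal T_a(\lambda,\ell)$: its block form relative to the splitting $L^2(\T)=\mathbb C\cdot 1\oplus L^2_0(\T)$, and, on the mean-zero part, the factorization $\mathcal T_a(\lambda,\ell)=\partial_z\circ(\lambda I-\mathcal A_a(\ell))$. For $(1)\Leftrightarrow(2)$ I would first record that $\partial_z f$ and $\partial_z^2 f$ have zero mean for every $2\pi$-periodic $f$, and that $\calM_k$ fixes the constants with $\calM_k 1=m(0)=1$. Writing $V=V_{\mathrm{avg}}+V_0\in\mathbb C\oplus L^2_0(\T)$, these give $\mathcal T_a(\lambda,\ell)V_{\mathrm{avg}}=-V_{\mathrm{avg}}w''-\sigma\ell^2 V_{\mathrm{avg}}$, of mean $-\sigma\ell^2 V_{\mathrm{avg}}$, while $\mathcal T_a(\lambda,\ell)V_0\in L^2_0(\T)$; since $H^{\alpha+2}(\T)=\mathbb C\cdot1\oplus\big(H^{\alpha+2}(\T)\cap L^2_0(\T)\big)$ is compatible with this splitting, $\mathcal T_a(\lambda,\ell)$ is block lower-triangular with diagonal blocks ``multiplication by $-\sigma\ell^2$'' on $\mathbb C$ and $\mathcal T_a(\lambda,\ell)|_{L^2_0(\T)}$. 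Because $\ell\neq0$ the first block is invertible, so the whole operator is invertible precisely when the second is; negating yields $(1)\Leftrightarrow(2)$. (In Fourier variables this is simply the statement that the $0$-th row of the matrix of $\mathcal T_a(\lambda,\ell)$ contains only the diagonal entry $-\sigma\ell^2$.)

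For $(2)\Leftrightarrow(3)$, a direct computation using $\partial_z\partial_z^{-1}=I$ on $L^2_0(\T)$ gives, for mean-zero $V$,
\[
\mathcal T_a(\lambda,\ell)V=\lambda\partial_z V-\partial_z^2\big((c-\calM_k+w)V\big)-\sigma\ell^2 V=\partial_z\big(\lambda V-\mathcal A_a(\ell)V\big),
\]
so that $\mathcal T_a(\lambda,\ell)|_{L^2_0(\T)}=\partial_z\circ(\lambda I-\mathcal A_a(\ell))$. Since $\partial_z\colon H^1(\T)\cap L^2_0(\T)\to L^2_0(\T)$ is a topological isomorphism, with bounded inverse $\partial_z^{-1}$, the operator $\mathcal T_a(\lambda,\ell)|_{L^2_0(\T)}$ is invertible if and only if $\lambda I-\mathcal A_a(\ell)$ is, i.e. if and only if $\lambda\notin\operatorname{spec}(\mathcal A_a(\ell))$; negating gives $(2)\Leftrightarrow(3)$.

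The step that will need genuine care is the Sobolev-index bookkeeping in this last argument: $\mathcal T_a(\lambda,\ell)$ is posed on $H^{\alpha+2}(\T)$ whereas $\mathcal A_a(\ell)$ is posed on $H^{\alpha+1}(\T)\cap L^2_0(\T)$, and one must check that these domains correspond under the factorization through $\partial_z$ (which costs one derivative) together with the $\partial_z^{-1}$ appearing in $\mathcal A_a(\ell)$. That is a short elliptic-regularity/bootstrap argument exploiting that $\calM_k$ is a Fourier multiplier with symbol of order $\alpha$ (Hypothesis~\ref{h:m} H2) and that $c+w$ is bounded away from zero for the small-amplitude profile in \eqref{e:expptw}. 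One should also recall that, by the closed-graph theorem, ``not invertible'' for these closed operators is the same as ``not bijective'', so along each equivalence it suffices to track injectivity, surjectivity, and closedness of the range. Apart from this bookkeeping the proof is purely formal, and its one indispensable hypothesis is $\ell\neq0$, which is exactly what makes the constant mode invertible and lets one reduce to $L^2_0(\T)$.
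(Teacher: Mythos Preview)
The paper does not supply its own proof of this proposition; it simply refers to \cite[Lemma~4.1, Corollary~4.2]{Haragus2011TransverseEquation} for the argument in an analogous setting. Your proposal is correct and is precisely the standard mechanism behind such results: the block lower-triangular structure of $\mathcal T_a(\lambda,\ell)$ with respect to $L^2(\T)=\mathbb C\cdot 1\oplus L^2_0(\T)$, together with $\ell\neq0$ making the scalar block $-\sigma\ell^2$ invertible, gives $(1)\Leftrightarrow(2)$; and the factorization $\mathcal T_a(\lambda,\ell)|_{L^2_0}=\partial_z\circ(\lambda I-\mathcal A_a(\ell))$ with $\partial_z$ an isomorphism of $L^2_0(\T)$ gives $(2)\Leftrightarrow(3)$. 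The domain bookkeeping you flag is genuine but routine: the key point is that $\calM_k$ has symbol $m(kn)\sim |n|^\alpha$ by Hypothesis~\ref{h:m}~H2, so (together with the lower-order multiplication terms, which for $\alpha\leq0$ require your observation that $c+w$ does not vanish for small $a$) the operator $\lambda-\mathcal A_a(\ell)$ is elliptic of order $\max(\alpha+1,1)$ and the regularity shift from $H^{\alpha+2}$ to $H^{\alpha+1}$ under the factorization through $\partial_z$ follows. In short, you have supplied the proof the paper omitted, by what is almost certainly the same route the cited reference takes.
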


We refer to \cite[Lemma~4.1, Corollary~4.2]{Haragus2011TransverseEquation} for a detailed proof in a similar situation. Proposition~\ref{prop:equiv} reduces the invertibility problem of $\mathcal T_a(\lambda,\ell)$ to the study of the spectrum of $\mathcal A_a(\ell)$ acting on $L^2_0(\T)$ with domain $H^{\alpha+1}(\T)\cap L^2_0(\T)$. The operator $\mathcal A_a(\ell)$ acting on $L^2_0(\T)$ has a compact resolvent so that its spectrum consists of isolated eigenvalues with finite multiplicity. In addition, the spectrum of  $\mathcal A_a(\ell)$ is symmetric with respect to both the real and imaginary axes.

A straightforward calculation reveals that 
\begin{align}\label{E:spec}
    \mathcal A_0(\ell)e^{inz} = i\omega_{n,\ell}e^{inz}\quad \text{for all}\quad n \in \mathbb{Z}\setminus \{0\}
\end{align}
where
\begin{align}\label{E:omega}
    \omega_{n,\ell} = n(m(k)-m(kn))-\dfrac{\sigma \ell^2}{n}.
\end{align}
Consequently, $L^2_0(\mathbb{T})$-spectrum of $\mathcal A_0(\ell)$ consists of purely imaginary eigenvalues of finite multiplicity.
Since
\[
\|\mathcal A_a(\ell) - \mathcal A_0(\ell)\|= O(|a|)
\]as $a \to 0$ uniformly in the operator norm. A standard perturbation argument then guarantees the spectrum of $\mathcal A_a(\ell)$ and $\mathcal A_0(\ell)$ will stay close for $|a|$ small. 
Recalling that the spectrum of $\mathcal A_a(\ell)$ is symmetric with respect to the imaginary axis, it follows then that for $|a|$ small when eigenvalues of $\mathcal A_a(\ell)$ bifurcate from the imaginary axis they must bifurcate in pairs resulting from collisions of eigenvalues of $\mathcal A_0(\ell)$ on the imaginary axis. For $p\neq q\in\mathbb{Z}\setminus \{0\}$, the two eigenvalues $i\omega_{p,\ell}$ and $i\omega_{q,\ell}$ collide for some $\ell=\ell_c$ when
\begin{equation}
    \omega_{p,\ell_c}=\omega_{q,\ell_c}
\end{equation}

The linear operator $\mathcal A_a(\ell)$ can be decomposed as
\[
\mathcal A_a(\ell) = J \L_a(\ell)
\]
where
\[J = \partial_z \quad \text{and} \quad \L_a(\ell) = c - \calM_k + w + \sigma \ell^2 \partial_z^{-2}
\]
The operator $J$ is skew-adjoint whereas the operator $\L_a(\ell)$ is self-adjoint.
The {\em Krein signature} $\k_n$ of an eigenvalue $i\omega_{n,\ell}$ of $\mathcal A_0(\ell)$ is defined as
\begin{equation}\label{e:krein}
\k_n =  \operatorname{sgn}(\left<\L_0(\ell) e^{inz}, e^{inz}\right>) = \operatorname{sgn}\left(m(k)-m(kn)-\dfrac{\sigma \ell^2}{n^2}\right), \quad n \in \Z \setminus \{0\}
\end{equation}
where $\operatorname{sgn}$ is the signum function which determines the sign of a real number. A pair of eigenvalues  leave imaginary axis after collision only if their Krein signatures $\kappa_n$ are opposite. We have the following lemma.

\begin{lemma}\label{lem1}  
For any $|a|$ sufficiently small, there exists a $\ell_a >0$ such that for all $|\ell|>\ell_a$, the spectrum of $\mathcal A_a(\ell)$ is purely imaginary if $(\sigma,m)$ is $\oneu$ or $\mind$, where these notations are explained in Table~\ref{tab:not}.
\end{lemma}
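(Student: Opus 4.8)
The plan is to exploit the factorization $\mathcal A_a(\ell)=J\,\mathcal L_a(\ell)$ already recorded, in which $J=\partial_z$ is skew-adjoint on $L^2_0(\T)$ and $\mathcal L_a(\ell)$ is self-adjoint, and to show that for $(\sigma,m)\in\{\oneu,\mind\}$ and $|\ell|$ large enough relative to $|a|$ the operator $\mathcal L_a(\ell)$ is \emph{definite}; a definite $\mathcal L$ forces $J\mathcal L$ to have purely imaginary spectrum, which is precisely the assertion. The first step is to record that
\[
\mu_n:=\langle\mathcal L_0(\ell)e^{inz},e^{inz}\rangle=m(k)-m(kn)-\frac{\sigma\ell^2}{n^2},\qquad n\in\Z\setminus\{0\},
\]
the very quantity appearing inside the signum in the Krein signature formula \eqref{e:krein}. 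For $(\sigma,m)=\oneu$ one has $\mu_{\pm1}=-\ell^2$, while for $|n|\ge2$ strict monotonicity of $m$ (Hypothesis~\ref{h:m} H3) together with evenness gives $m(kn)\ge m(2k)>m(k)$, so $\mu_n\le m(k)-m(2k)<0$. Setting $2\delta:=|m(2k)-m(k)|>0$, this shows $\sup_n\mu_n\le-\min\{\ell^2,2\delta\}$, i.e. $\mathcal L_0(\ell)$ is negative definite with spectrum contained in $(-\infty,-\min\{\ell^2,2\delta\}]$. The case $(\sigma,m)=\mind$ is symmetric: now $\mu_{\pm1}=\ell^2$ and $\mu_n\ge m(k)-m(2k)=2\delta>0$ for $|n|\ge2$, so $\mathcal L_0(\ell)$ is positive definite with spectrum in $[\min\{\ell^2,2\delta\},\infty)$.

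The second step transfers this to $\mathcal L_a(\ell)$. The difference $\mathcal L_a(\ell)-\mathcal L_0(\ell)$ is multiplication by the real-valued function $(c-m(k))+w$, whose $L^\infty$-norm is $O(|a|)$ by \eqref{e:expptw} and uniform in $\ell$; hence $\|\mathcal L_a(\ell)-\mathcal L_0(\ell)\|\le C_0|a|$ with $C_0$ independent of $a$ and $\ell$, and by the min--max principle the spectrum of $\mathcal L_a(\ell)$ lies within $C_0|a|$ of that of $\mathcal L_0(\ell)$. I would then set $\ell_a:=\sqrt{C_0|a|}$ (which is $<\sqrt{2\delta}$ once $|a|$ is small enough), so that for every $|\ell|>\ell_a$ the gap from $0$ found in the first step strictly exceeds $C_0|a|$; consequently $\mathcal L_a(\ell)$ stays negative definite when $(\sigma,m)=\oneu$ and positive definite when $(\sigma,m)=\mind$.

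The third step deduces purely imaginary spectrum for $\mathcal A_a(\ell)=\partial_z\mathcal L_a(\ell)$. Because $\mathcal A_a(\ell)$ has compact resolvent, any $\lambda$ in its spectrum is an eigenvalue: $\partial_z\mathcal L_a(\ell)V=\lambda V$ for some $0\ne V\in H^{\alpha+1}(\T)\cap L^2_0(\T)$, and we may assume $\lambda\ne0$. Integrating this identity and using $V\in L^2_0(\T)$ gives $\mathcal L_a(\ell)V=\lambda\,\partial_z^{-1}V+\mathrm{const}$, which lies in $H^1(\T)$ since $\alpha\ge-1$ forces $\partial_z^{-1}$ to map $H^{\alpha+1}(\T)$ into $H^{\alpha+2}(\T)\subset H^1(\T)$; a routine bootstrap then gives $V\in C^\infty(\T)$. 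Writing $g:=\mathcal L_a(\ell)V$ and using self-adjointness of $\mathcal L_a(\ell)$ together with $\langle\mathcal L_a(\ell)V,V\rangle\in\R$,
\[
\lambda\,\langle\mathcal L_a(\ell)V,V\rangle=\langle\lambda V,\mathcal L_a(\ell)V\rangle=\langle\partial_z g,g\rangle .
\]
Integration by parts on $\T$ shows $\langle\partial_z g,g\rangle=-\overline{\langle\partial_z g,g\rangle}$, so it is purely imaginary, whereas $\langle\mathcal L_a(\ell)V,V\rangle$ is a nonzero real number by the definiteness established above; therefore $\lambda\in i\R$. Since $\lambda$ was an arbitrary spectral point, $\mathcal A_a(\ell)$ has purely imaginary spectrum for all $|\ell|>\ell_a$.

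The one genuinely non-routine ingredient is the domain and regularity bookkeeping in the third step --- ensuring that an eigenfunction of $\mathcal A_a(\ell)$ is smooth enough for $\mathcal L_a(\ell)V\in H^1(\T)$ and for the self-adjoint pairing to be legitimate --- which is exactly where the lower bound $\alpha\ge-1$ of Hypothesis~\ref{h:m} H2 is used. The remaining steps are the short monotonicity computation for $\mu_n$ and a standard Kato-type perturbation estimate, the only subtlety being to note that $2\delta=|m(2k)-m(k)|$ and $C_0$ do not depend on $\ell$, which holds because $\ell$ enters $\mathcal L_a(\ell)$ only through the already-diagonalized self-adjoint term $\sigma\ell^2\partial_z^{-2}$.
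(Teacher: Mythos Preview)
Your proof is correct and takes a somewhat different route from the paper's. The paper argues via Krein signature theory: it observes that for $(\sigma,m)=\oneu$ or $\mind$ the Krein signatures $\kappa_n$ all share the same sign, then invokes the general principle that colliding eigenvalues of like signature cannot leave the imaginary axis, so only the collision at the origin (which occurs at $\ell=0$ with degenerate signature) might produce instability for small $\ell$. Your argument starts from the same computation of $\mu_n$ but, instead of citing Krein theory, recognizes that uniformity of sign of $\mu_n$ is precisely definiteness of $\mathcal L_0(\ell)$, quantifies the spectral gap as $\min\{\ell^2,2\delta\}$, perturbs to $\mathcal L_a(\ell)$ by a standard norm estimate, and then gives the short self-adjoint pairing argument showing that $J\mathcal L$ with $J$ skew-adjoint and $\mathcal L$ definite has purely imaginary spectrum. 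Your approach is more elementary and self-contained---it does not treat the Krein collision criterion as a black box---and it yields the explicit threshold $\ell_a=O(|a|^{1/2})$, which the paper does not extract. The paper's proof is terser because it defers all the work to the Krein machinery. A minor remark: your regularity bootstrap in the third step is more cautious than needed, since $\partial_z g=\lambda V\in L^2$ together with $g=\mathcal L_a(\ell)V\in L^2$ already gives $g\in H^1(\T)$, which is all the integration by parts requires.
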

\begin{proof}
For $(\sigma,m)=\oneu$, $\k_n$ is negative for all $n\in \Z \setminus \{0\}$ and for $(\sigma,m)=\mind$, $\k_n$ is positive for all $n\in \Z \setminus \{0\}$ for all $k>0$. Therefore, Krein signatures of all eigenvalues remain same in both cases implying  that eigenvalues will not bifurcate from the imaginary axis even if there is a collision away from the origin for $|a|$ sufficiently small. The collision at the origin may possibly lead to bifurcation away from the imaginary axis for sufficiently small $\ell$ (in fact, this is actually the case, see Lemma~\ref{lem:long}). Therefore, there exists an $\ell_a$ depending on $a$ such that for all $|\ell|>\ell_a$, the spectrum of $\mathcal A_a(\ell)$ is purely imaginary.
\end{proof}

It follows from Lemma~\ref{lem1} that the only collision that may lead to instability for $(\sigma,m)=\oneu$ or $\mind$ is the collision at the origin between $\omega_{1,0}$ and $\omega_{-1,0}$. Since this collision takes place at $\ell=0$, the perturbation analysis will take place in the regime $|\ell|\ll 1$. In other words, the underlying transverse perturbations are of long wavelength. The other regime is of finite and short-wavelength perturbations. We split our further analysis into these two regimes.


\subsection{Finite and short-wavelength transverse perturbations}\label{s:fs}
We start the analysis of the spectrum of $\mathcal A_a(\ell)$ with the values of $\ell$ away from the origin, $|\ell|\geq \ell_0$, for some $\ell_0 > 0$, i.e., finite and short wavelength transverse perturbations. Using Lemma~\ref{lem1}, there are no collisions of eigenvalues that may lead to instability for $|\ell|\geq\ell_0>0$ if $(\sigma,m)=\oneu$ or $\mind$. Therefore, we restrict our attention to other two cases, $(\sigma,m)=\oned$ or $\minu$.


Let eigenvalues $i\omega_{p,\ell}$ and $i\omega_{q,\ell}$, $p \neq q$, collide at $\ell=\ell_c>0$. 
From \eqref{e:krein}, Krein signatures $\k_p$ and $\k_q$ are opposite at $\ell=\ell_c$ when $pq < 0$, i.e. $p$ and $q$ should be of opposite parity. 
A direct calculation shows that if $(\sigma,m)=\oned$ or $\minu$ then  $i\omega_{p,\ell}$ and $i\omega_{-q,\ell}$ collide when
\[
\ell^2 = \ell^2_{p,q} = \frac{\sigma pq}{p+q}(p(m(k)-m(kp))+q(m(k)-m(kq)))>0.
\]
for all $p, q \in \N$ except $(p,-q)=(1,-1)$.

Let $\Delta$ denotes the distance between indices $p$ and $-q$ of colliding eigenvalues. For $\Delta =1$ and $2$ there are no pairs of eigenvalues which can lead to instability. For $\Delta =3$, there are two such pairs of colliding eigenvalues,  $\{\omega_{-1,\ell},\omega_{2,\ell}\}$ and $\{\omega_{1,\ell},\omega_{-2,\ell}\}$ which can lead to instability. 
In what follows, we shall do instability analysis for $\Delta n = 3$ and check whether the pair of potentially unstable eigenvalues indeed lead to instability or not.
Let for some $n\in \Z$, we have
\begin{align}
     0 \neq \omega_{n,\ell_c} = \omega_{n+3,\ell_c} = \omega \hspace{3px} (say)
\end{align}
for some $\ell^2=\ell_c^2>0$.
Therefore, $i\omega$ is an eigenvalue of $\mathcal{A}_0(\ell_c)$ of multiplicity two with an orthonormal basis of eigenfunctions $\{e^{inz},e^{i(n+3)z}\}$. For $|\ell-\ell_c|$ and $|a|$ sufficiently small, let $\lambda_{n,a,\ell}$ and $\lambda_{n+3,a,\ell}$ be eigenvalues of $\mathcal{A}_a(\ell)$ bifurcating from $i\omega$ with an orthonormal basis of eigenfunctions $\{\phi_{n,a,\ell}(z),\phi_{n+3,a,\ell}(z)\}$. Note that $\lambda_{n,0,\ell_c}=\lambda_{n+3,0,\ell_c}=i\omega$ with $\phi_{n,0,\ell_c}(z)=e^{inz}$ and $\phi_{n+3,0,\ell_c}(z)=e^{i(n+3)z}$.
Let
\begin{align}
    \lambda_{n,a,\ell} = i \omega + i \mu_{n,a,\ell}
    \quad \quad and \quad\quad
    \lambda_{n+3,a,\ell} = i \omega + i \mu_{n+3,a,\ell}
\end{align}
We are interested in the location of $\mu_{n,a,\ell}$ and $\mu_{n+3,a,\ell}$ for $|\ell-\ell_c|$ and $|a|$ sufficiently small. 

We start with the following expansions of eigenfunctions\cite{Creedon2021High-FrequencyApproach}
\begin{align}\label{eq:eig1}
    \phi_{a,n,\ell}(z) =& e^{inz}+a\phi_{n,1}+a^2\phi_{n,2}+a^3\phi_{n,3}+O(a^4), \\
    \phi_{a,n+3,\ell}(z) =& e^{i(n+3)z}+a\phi_{n+3,1}+a^2\phi_{n+3,2}+a^3\phi_{n+3,3}
      +O(a^4).\label{eq:eig2}
\end{align}
We use orthonormality of $\phi_{a,n,\ell}(z)$ and $\phi_{a,n+3,\ell}(z)$ to find that
\[
 \phi_{n,1}=\phi_{n,2}=\phi_{n,3}=\phi_{n+3,1}=\phi_{n+3,2}=\phi_{n+3,3}= 0.
\]
Using expansions of $w$ and $c$ in \eqref{e:expptw}, we expand $\mathcal{A}_{a}(\ell)$ in $a$ as
\begin{align}\label{eq:expA}
\mathcal{A}_{a}(\ell)=\mathcal{A}_{0}(\ell)+a\partial_z (\cos z)+a^2\partial_z\left(-\dfrac{A_2}{2}+A_2\cos 2z\right)+a^3\partial_z(A_3\cos 3z)+O(a^4)
\end{align}
Now, to trace the bifurcation of the eigenvalues from the point of the collision on the imaginary axis for $|\ell-\ell_c|$ and $|a|$ sufficiently small, we compute the action of $\mathcal{A}_a(\ell)$ and identity operators on the extended eigenspace $\{\phi_{a,n,\ell}(z), \phi_{a,n+3,\ell}(z)\}$ and arrive at
\begin{equation*}
    \mathcal{B}_a(\ell) = \begin{pmatrix} i\omega-ia^2\dfrac{A_2}{2}n-\dfrac{i\sigma \varepsilon}{n} & ia^3\dfrac{A_3}{2}(n+3) \\ ia^3\dfrac{A_3}{2}n & i\omega-ia^2\dfrac{A_2}{2}(n+3)-\dfrac{i\sigma \varepsilon}{n+3}
    \end{pmatrix} + O(a^4).
\end{equation*}
where $\varepsilon=\ell^2-\ell_c^2$ and $\mathcal{I}_a$, $2\times 2$ identity matrix, respectively. To locate $\mu$, we compute 
\begin{align}\label{eq:cheq1p}
    \textbf{$\det(\mathcal{B}_{a}(\ell)-(i \omega + i \mu) \mathcal{I}_{a}) = 0$}
\end{align}
and arrive at a quadratic in $\mu$
\begin{align*}
   &\mu^2  + \mu \left(\sigma \varepsilon \left(\dfrac{1}{n}+\dfrac{1}{n+3}\right)+\dfrac{a^2A_2}{2}((n+3)+n)  \right)\\
   &\qquad \qquad \qquad \qquad \qquad \qquad + \dfrac{a^4A_2^2n(n+3)}{4}+\dfrac{\sigma^2 \varepsilon^2}{n(n+3)}+O(a^2|\varepsilon|+ |a|^5)=0.
\end{align*}
A direct computation shows that the discriminant of the above quadratic is
\begin{align*}
    \operatorname{disc}_a(\varepsilon) = \dfrac{9\sigma^2\varepsilon^2}{n^2(n+3)^2}+\dfrac{9 a^4A_2^2}{4}+O(a^2|\varepsilon|+ |a|^5).
\end{align*}
Note that, for $|\varepsilon|$ and $|a|$ sufficiently small, the leading term in the discriminant is always positive irrespective of the values of $n$, $\sigma$ and $m$. Therefore, we do not observe any instability for the $\Delta =3$ case by performing the perturbation calculation up to the fourth power of the amplitude parameter $a$.

\begin{remark}\label{rem:Delta4}
A similar instability analysis can be carried out for any $\Delta \geq 4$. But to explicitly obtain all coefficients, we will need higher powers of $a$ in the expansion of the operator $\mathcal A_{a,\xi}$ and we will need to calculate more terms in the expansion of solution $w$ which we do not pursue here. But for a fixed $\Delta\geq 4$, the matrix $\mathcal{B}_a(\ell)$ would take the form
\begin{align*}
	\mathcal{B}_{a}(\ell) = & \begin{pmatrix}
		i\omega-\dfrac{i\sigma\varepsilon}{n}+in(\alpha_2a^2+\alpha_4a^4+\dots) &
		i(n+\Delta)\beta a^{\Delta}\\
		in\beta a^{\Delta} & 
		i\omega-\dfrac{i\sigma\varepsilon}{n+\Delta}+i(n+\Delta)(\alpha_2a^2+\alpha_4a^4+\dots)
	\end{pmatrix}\\
	&+O(a^{\Delta+1})
\end{align*}
and $\mathcal{I}_a$ would be $2\times 2$ identity matrix. Then, the resulting discriminant would look like
\begin{align*}
    \operatorname{disc}_a(\varepsilon) = \dfrac{\sigma^2 \Delta^2 \varepsilon^2}{n^2(n+\Delta)^2}+\Delta ^2\alpha_2^2a^4+O(a^2|\varepsilon|+ |a|^5)
\end{align*}
which is positive for sufficiently small $|\varepsilon|$ and $|a|$ leading to stability in a sufficiently small neighbourhood of $\ell=\ell_c$ and $a=0$. 
\end{remark}

\subsection{Long wavelength transverse perturbations}
In all four cases $(\sigma,m)=\oneu$, $\oned$, $\minu$, and $\mind$, there is a collision at the origin of eigenvalues $i\omega_{-1,\ell}$ and $i\omega_{1,\ell}$ at $\ell=0$. Since $m$ is monotonic for $k>0$, the remaining eigenvalues at $\ell=0$ are all simple, purely imaginary, and located outside the open ball $B(0;|m(k)-m(2k)|)$. The perturbation analysis to locate the bifurcation of these eigenvalues for small $\ell$ and $a$ will correspond to long wavelength transverse perturbations. The following lemma ensures that for sufficiently small $\ell$ and $a$, bifurcating eigenvalues from the origin are separated from the rest of the spectrum by a non-zero distance.


\begin{lemma}\label{l:prop}
The following properties hold, for any $\ell$ and $a$ sufficiently small.\\
\begin{enumerate}
    \item The spectrum of $\mathcal A_a(\ell)$ decomposes as 
    \[
     \operatorname{spec}_0(\mathcal A_a(\ell)) \cup \operatorname{spec}_1(\mathcal A_a(\ell)),
    \]
    with 
    \[
\operatorname{spec}_0(\mathcal A_a(\ell)) \subset B(0;R/3),\quad 
\operatorname{spec}_1(\mathcal A_a(\ell)) \subset \C\setminus \overline{B(0;R/2)} 
\]
where $R=|m(k)-m(2k)|$.
\item The spectral projection $\Pi_a(\ell)$ associated with $\operatorname{spec}_0(\mathcal A_a(\ell)) $ satisfies $\|\Pi_a(\ell)-\Pi_0(0)\|$ = $O(\ell^2+|a|)$.
\item The spectral subspace $\mathcal X_a(\ell) = \Pi_a(\ell)(L^2_0(0,2\pi))$ is two dimensional.
\end{enumerate}
\end{lemma}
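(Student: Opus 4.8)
The plan is to isolate the eigenvalue $0$ of the Fourier‑diagonal operator $\mathcal A_0(0)$ by a small circle and to run analytic perturbation theory (Riesz spectral projections) for the family $\mathcal A_a(\ell)$, which is a small perturbation of $\mathcal A_0(0)$ for $(a,\ell)$ near $(0,0)$. First I would record the spectrum of $\mathcal A_0(0)$: by \eqref{E:spec}--\eqref{E:omega}, $\mathcal A_0(0)e^{inz}=i\omega_{n,0}e^{inz}$ with $\omega_{n,0}=n(m(k)-m(kn))$, so $\mathcal A_0(0)$ is diagonal in the orthonormal Fourier basis $\{e^{inz}\}_{n\neq 0}$ of $L^2_0(\T)$. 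Since $m$ is even and strictly monotonic for $k>0$ (Hypothesis~\ref{h:m}~H3), $\omega_{n,0}=0$ iff $|n|=1$, so $0$ is a semisimple eigenvalue of multiplicity exactly two with eigenspace $\operatorname{span}\{e^{iz},e^{-iz}\}$, and a residue computation shows $\Pi_0(0)$ is the orthogonal projection onto it. For $|n|\geq 2$, monotonicity gives $|m(k)-m(kn)|\geq|m(k)-m(2k)|=R$, hence $|\omega_{n,0}|\geq 2R$; in particular $R>0$ (this is the one place where H3 is essential, otherwise the statement is vacuous), $0$ is isolated, and $\operatorname{spec}(\mathcal A_0(0))\setminus\{0\}\subset\{iy:|y|\geq 2R\}$, a fortiori outside $\overline{B(0;R/2)}$.

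For item (1), I would work on the compact annulus $\mathcal K=\{z\in\C:\ R/3\leq|z|\leq R/2\}$. For $z\in\mathcal K$ one has $\operatorname{dist}(z,\operatorname{spec}(\mathcal A_0(0)))\geq\min\{R/3,\,3R/2\}=R/3$, so, $\mathcal A_0(0)$ being diagonal, $\|(z-\mathcal A_0(0))^{-1}\|\leq 3/R$ on $\mathcal K$. Writing $\mathcal A_a(\ell)-\mathcal A_0(0)=\bigl(\mathcal A_a(\ell)-\mathcal A_0(\ell)\bigr)+\sigma\ell^2\partial_z^{-1}$, the first bracket is $O(|a|)$ by the estimate recorded earlier in this section and the second term has operator norm $\leq\ell^2$, so $\|\mathcal A_a(\ell)-\mathcal A_0(0)\|=O(|a|+\ell^2)=:\delta$. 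Once $\delta<R/6$, a Neumann‑series argument shows that $z-\mathcal A_a(\ell)$ is invertible for all $z\in\mathcal K$ with $\|(z-\mathcal A_a(\ell))^{-1}\|\leq 6/R$; hence $\operatorname{spec}(\mathcal A_a(\ell))\cap\mathcal K=\varnothing$. Since $\mathcal A_a(\ell)$ has compact resolvent its spectrum is discrete, and being disjoint from $\mathcal K$ it splits as $\operatorname{spec}_0(\mathcal A_a(\ell))=\operatorname{spec}(\mathcal A_a(\ell))\cap B(0;R/3)$ and $\operatorname{spec}_1(\mathcal A_a(\ell))=\operatorname{spec}(\mathcal A_a(\ell))\setminus\overline{B(0;R/2)}$, which is (1).

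For (2) and (3), set $\Pi_a(\ell)=\frac1{2\pi i}\oint_{|z|=R/3}(z-\mathcal A_a(\ell))^{-1}\,dz$, the spectral projection associated with $\operatorname{spec}_0(\mathcal A_a(\ell))$ (the contour lies in $\mathcal K$, where all resolvents are uniformly bounded). Subtracting the corresponding integral for $\mathcal A_0(0)$, applying the resolvent identity $(z-\mathcal A_a(\ell))^{-1}-(z-\mathcal A_0(0))^{-1}=(z-\mathcal A_a(\ell))^{-1}\bigl(\mathcal A_a(\ell)-\mathcal A_0(0)\bigr)(z-\mathcal A_0(0))^{-1}$, and using the uniform bounds $6/R$ and $3/R$ on $|z|=R/3$, gives $\|\Pi_a(\ell)-\Pi_0(0)\|\leq\tfrac{6}{R}\,\|\mathcal A_a(\ell)-\mathcal A_0(0)\|=O(|a|+\ell^2)$, which is (2). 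Finally $\|\Pi_a(\ell)-\Pi_0(0)\|<1$ for $(a,\ell)$ small, so $\Pi_a(\ell)$ has the same rank as $\Pi_0(0)$, namely $2$; thus $\mathcal X_a(\ell)=\Pi_a(\ell)(L^2_0(\T))$ is two‑dimensional (and $\operatorname{spec}_0(\mathcal A_a(\ell))$ consists of exactly two eigenvalues counted with multiplicity), which is (3).

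I expect the only delicate point to be the first step: verifying that $0$ is an isolated, semisimple double eigenvalue of $\mathcal A_0(0)$ separated from the rest of the spectrum by the gap $R=|m(k)-m(2k)|>0$, which leans essentially on strict monotonicity of $m$. Everything afterward is standard analytic perturbation theory of an isolated spectral set, the only caveat being to ensure that $\mathcal A_a(\ell)-\mathcal A_0(0)$ is small in the operator norm in which $\mathcal A_a(\ell)$ is being considered (the bound invoked above), equivalently that $\mathcal A_a(\ell)\to\mathcal A_0(0)$ in the norm‑resolvent sense uniformly on $\mathcal K$.
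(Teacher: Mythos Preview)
Your argument is correct and is precisely the standard Riesz--projection perturbation argument; the paper does not prove this lemma but simply refers to \cite[Lemma~4.7]{Haragus2011TransverseEquation}, where the same method is used. You have also correctly isolated the one nontrivial input (the spectral gap $|\omega_{n,0}|\geq 2R$ for $|n|\geq 2$ via strict monotonicity of $m$) and the one genuine caveat (that the smallness of $\mathcal A_a(\ell)-\mathcal A_0(0)$ must be read in the norm--resolvent sense, since $\partial_z(w_a\,\cdot)$ is not $L^2$--bounded), which is exactly what makes the Neumann/resolvent estimate go through.
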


The proof of these properties is similar to \cite[Lemma~4.7]{Haragus2011TransverseEquation}. In the following lemma, we show that for sufficiently small $\ell$ and $a$, two eigenvalues in $\operatorname{spec}_0(\mathcal{A}_a(\ell))$ leave imaginary axis if $(\sigma,m)=\oneu$ or $\mind$ but remain on imaginary axis if $(\sigma,m)=\oned$ or $\minu$.

\begin{lemma}\label{lem:long}
Assume $\ell$ and $a$ are sufficiently small. For $(\sigma,m)=\oneu$ or $\mind$, there exists $\ell_a^2 = \sigma A_2a^2+O(a^4)>0$, such that 
\begin{enumerate}
    \item for any $\ell^2\geq \ell_a^2$, the spectrum of $\mathcal A_a(\ell)$ is purely imaginary.
    \item for any $\ell^2< \ell_a^2$, the spectrum of $\mathcal A_a(\ell)$ is purely imaginary, except for a pair of simple real eigenvalues with opposite signs.
\end{enumerate}
For $(\sigma,m)=\oned$ or $\minu$, the spectrum of $\mathcal{A}_a(\ell)$ is purely imaginary.
\end{lemma}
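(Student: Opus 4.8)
The plan is to carry out a Lyapunov–Schmidt / spectral-projection reduction on the two-dimensional invariant subspace $\mathcal X_a(\ell)$ supplied by Lemma~\ref{l:prop}, exactly in the spirit of the $\Delta=3$ analysis in Lemma~\ref{lem:peru}, but now centered at the origin where $i\omega_{1,0}$ and $i\omega_{-1,0}$ collide. First I would note that at $a=\ell=0$ the colliding eigenvalue is $0$, with (a convenient real) eigenbasis built from $e^{iz}$ and $e^{-iz}$ of $\mathcal A_0(0)$; since $\mathcal A_a(\ell)$ has the reality/oddness structure $\mathcal A_a(\ell)=J\L_a(\ell)$ with $J=\partial_z$ skew-adjoint and $\L_a(\ell)$ self-adjoint, the $2\times 2$ matrix $B_a(\ell)$ representing $\mathcal A_a(\ell)$ on $\mathcal X_a(\ell)$ inherits that structure. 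Using the expansions \eqref{e:expptw}, \eqref{e:A_0A2c2} and the expansion \eqref{eq:expA} of $\mathcal A_a(\ell)$, together with the vanishing of the first eigenfunction corrections (as in the proof of Lemma~\ref{lem:peru}), I would compute $B_a(\ell)$ through order $a^2$ and $\ell^2$. The key is that $\partial_z^{-1}$ acts on the span of $e^{\pm iz}$ as $\mp i$, so the term $\sigma\ell^2\partial_z^{-1}$ contributes a genuine off-diagonal coupling at order $\ell^2$, while the $a^2$ corrections from $w$ and $c$ (involving $A_0$ and $A_2$) contribute both diagonal and off-diagonal pieces.

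Next I would write down the resulting characteristic polynomial for the two bifurcating eigenvalues $\lambda=i\mu$. After using the reality/Hamiltonian symmetry to eliminate the trace-type terms (the eigenvalues come in a $\pm$ pair about the imaginary axis, so $\mu^2$ solves a quadratic whose relevant invariant is a discriminant-type expression), I expect to arrive at a relation of the schematic form $\mu^2 = \ell^2\bigl(\ell^2 - \ell_a^2\bigr)\cdot(\text{positive constant}) + O(\text{higher order})$, with $\ell_a^2=(A_2-2A_0)\sigma a^2 + O(a^4)$. The sign bookkeeping is where the case split $(\sigma,m)=\oneu,\mind$ versus $\oned,\minu$ enters: one must check that $(A_2-2A_0)\sigma>0$ precisely in the first two cases (using H3, i.e. the strict monotonicity of $m$, which fixes the signs of $1-m(k)$ and $m(2k)-m(k)$ entering $A_0$ and $A_2$), so that $\ell_a^2>0$ there; and that in the cases $\oned,\minu$ the analogous quantity has the wrong sign, forcing $\mu^2<0$ — hence purely imaginary spectrum — for all small $\ell$. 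Then claim (1) follows for $\ell^2\ge\ell_a^2$ (where $\mu^2\ge 0$ is actually $\mu^2\le 0$ with the correct convention, giving imaginary $\lambda$) and claim (2) for $\ell^2<\ell_a^2$ (where $\mu^2$ has the sign making $\lambda=i\mu$ real, i.e. a genuine pair of real eigenvalues of opposite sign); the remaining spectrum stays imaginary by Lemma~\ref{lem1} in the $\oneu,\mind$ cases and by the Krein-signature argument (all signatures equal, no collision off the origin can open) combined with Lemma~\ref{l:prop}(1) isolating $\operatorname{spec}_0$ from $\operatorname{spec}_1$.

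The main obstacle I anticipate is getting the order-$a^2$ entries of $B_a(\ell)$ exactly right — in particular tracking how the mean-zero constraint and the $\partial_z$ in front of $(c-\calM_k+w)$ interact with the second-order profile corrections $a^2(A_0+A_2\cos 2z)$, since $\cos 2z$ couples $e^{\pm iz}$ to $e^{\mp iz}$ through the $\partial_z$ and is exactly what produces the $A_2$ off-diagonal term, while the constant $a^2 A_0$ piece only shifts the diagonal. One has to be careful that the naive projection is not enough and that, just as in Lemma~\ref{lem:peru}, the correct reduced matrix is obtained by projecting $\mathcal A_a(\ell)$ onto the \emph{perturbed} eigenspace; fortunately the eigenfunction corrections vanish to the needed order by orthonormality, so this subtlety is benign. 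A secondary check is confirming $\ell_a^2>0$ uniformly: since $A_0$ and $A_2$ blow up as $m(k)\to 1$ or $m(2k)\to m(k)$, one implicitly restricts to $k$ bounded away from the degenerate set, consistent with the standing hypotheses on $m$.
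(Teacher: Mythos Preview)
Your overall strategy---restrict to the two-dimensional spectral subspace $\mathcal X_a(\ell)$ from Lemma~\ref{l:prop}, compute a $2\times 2$ matrix, and read off the eigenvalues---is exactly the paper's, and the treatment of $\operatorname{spec}_1$ via Krein signatures is also correct. The genuine gap is your assertion that ``the eigenfunction corrections vanish to the needed order by orthonormality.'' Here the two colliding indices are $\pm 1$, so $\Delta=2$: the order-$a$ correction to $e^{iz}$ picks up a nonzero $2A_2e^{2iz}$ component (and similarly $e^{-iz}\mapsto 2A_2e^{-2iz}$), and when the order-$a$ part $a\partial_z(\cos z\,\cdot\,)$ of $\mathcal A_a(\ell)$ acts on that correction it feeds back into the span of $e^{\pm iz}$ at order $a^2$---exactly the order that determines $\ell_a^2$. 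Projecting naively onto $\{\cos z,\sin z\}$ one obtains $\lambda^2=-\ell^2(\ell^2+\sigma a^2A_2)$, i.e.\ $\ell_a^2=-\sigma a^2A_2$ instead of the correct $(A_2-2A_0)\sigma a^2$. Since for $(\sigma,m)=\oneu$ one has $A_2>0$ and $A_0<0$, the naive value is \emph{negative} and predicts stability where there is instability (and in the $\oned,\minu$ cases the error goes the other way). The $A_0$ you expect to see never appears in the naive calculation, because $c_2+A_0=-A_2/2$ in \eqref{eq:expA}; it enters only through the eigenfunction corrections. So the subtlety you flag is not benign---it reverses every conclusion of the lemma.

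The paper sidesteps this by a different choice of basis: rather than perturbing $\{\cos z,\sin z\}$ from $(a,\ell)=(0,0)$, it notes that at $\ell=0$ the kernel of $\mathcal A_a(0)$ is known \emph{exactly} for all small $a$, being spanned by $\partial_z w$ and the combination $(\partial_b c)(\partial_a w)-(\partial_a c)(\partial_b w)$ obtained by differentiating the profile equation \eqref{e:onedtw} in $z$, $a$, $b$. These kernel elements already contain the $2aA_2\cos 2z$, $2aA_2\sin 2z$ corrections, so using them as the basis for $\mathcal X_a(\ell)$ one only perturbs in $\ell$, and the $A_0$ contribution enters correctly. If you want to salvage your route you must include the first-order eigenfunction corrections explicitly; orthonormality only fixes their components along $e^{\pm iz}$, not the $e^{\pm 2iz}$ components that matter here.
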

\begin{proof}
Consider the decomposition of the spectrum of $\mathcal A_a(\ell)$ in Lemma \ref{l:prop}.
The eigenvalues in
$\operatorname{spec}_0((\mathcal A_a(\ell))$ are the eigenvalues of 
the restriction of $\mathcal A_a(\ell)$ to the two-dimensional spectral subspace $\mathcal X_a(\ell)$. We determine the location of these
eigenvalues by computing successively
a basis of  $\mathcal X_a(\ell)$, 
the $2\times2$ matrix representing the action of $\mathcal A_a(\ell)$
on this basis, and the eigenvalues of this matrix. Note that for $a=0$, $\mathcal{X}_0(\ell)$ is spanned by $\{\cos z, \sin z\}$. Moreover, a direct calculation shows that 
zero is an  $L^2_0(\mathbb{T})$-eigenvalue of $\mathcal{A}_a(0)$ 
of multiplicity two with eigenfunctions $(\partial_b c)(\partial_a w)-(\partial_a c)(\partial_b w))(z;k,a,0)$ and $\partial_z w(z;k,a,0)$. We use expansions of $w$ and $c$ in \eqref{e:expptw} to calculate expansion of a basis $\{\phi_1,\phi_2\}$ for $\mathcal{X}_a(\ell)$ for small $a$ and $\ell$ as
\begin{align}
\phi_1(z) &:=\frac{1}{(m(k)-1)}((\partial_b c)(\partial_a w)-(\partial_a c)(\partial_b w))(z;k,a,0) \notag\\
&:=\cos z + 2a A_2\cos 2z+ 3a^2 A_3 \cos 2z +O(a^3),\label{def:p2}\\
\phi_2(z) &:=-\frac{1}{a}\partial_z w(z;k,a,0)= \sin z+2 a A_2\sin2z + 3a^2 A_3 \sin 3z +O(a^3).\label{def:p1}
\end{align}
We use expansion of $\mathcal A_a(\ell)$ in \eqref{eq:expA} to find actions of $\mathcal{A}_a(\ell)$ and identity operator on $\{\phi_1,\phi_2\}$ as
\[
\mathbf{B}_a(\ell)=\begin{pmatrix}0&\sigma \ell^2 - A_2 a^2
\\-\sigma \ell^2&0
\end{pmatrix}+O(|a|(\ell^2+a^2)).
\]
and
\begin{equation}\label{E:Ia}
\mathbf{I}=\left(\begin{matrix} 1 &  0 \\ 0 &  1 \\\end{matrix}\right)+O(a^4)
\end{equation}
for $|a|$ and $|\ell|$ sufficiently small. To locate the two eigenvalues bifurcating from the origin, we examine the characteristic equation $\det(\mathbf{B}_a(\ell)-\lambda \mathbf{I})=0$, which leads to
\[
 \lambda^2 + \sigma\ell^2(\sigma\ell^2-A_2a^2) +  O(|a|\ell^2(\ell^2+a^2)) = 0.
\]
From which we conclude that 
\[
\lambda^2 = -\ell^2(\ell^2-\sigma a^2A_2) + O(|a|\ell^2(\ell^2+a^2)).
\]
For $\ell=a=0$, we get zero as a double eigenvalue, which agrees with our calculation. For $(\sigma,m)=\oned$ and $\minu$ , we obtain two purely imaginary eigenvalues for all $\ell$ and $a$ sufficiently small. For $(\sigma,m)=\oneu$ and $\mind$, we obtain two purely imaginary eigenvalues when
$\ell^2\geq \ell_a^2$, and real eigenvalues, with opposite signs when $\ell^2<\ell_a^2$ where
\[
\ell_a^2 = \sigma A_2a^2+O(a^4).
\]

Now, we study $\operatorname{spec}_1(\mathcal A_a(\ell))$ for $\ell$ and $a$ sufficiently small. For $n \in \Z\setminus \{-1,0,1\}$, as $\ell$ is sufficiently small, $\k_n$ in \eqref{e:krein} depends upon behavior of $m(k)$, which is monotonic for all $k>0$.
Hence, $k_n$ is negative for all $n$ and $\sigma$ when $m$ is monotonically increasing, and $k_n$ is positive for all $n$ and $\sigma$ when $m$ is monotonically decreasing. This implies that even if eigenvalues in $\operatorname{spec}_1(\mathcal A_0(\ell))$ collide, they remain on the imaginary axis and for all $\ell$ and $a$ sufficiently small $\operatorname{spec}_1(\mathcal A_a(\ell))$ is a subset of the imaginary axis. This proves the lemma.
\end{proof}


\section{Non-Periodic Perturbations} \label{sec:nonperperturb}
In this section, we will study two-dimensional perturbations which are non-periodic (localized or bounded) in the direction of the propagation of the wave. For non-periodic perturbations, we study the invertibility of $\mathcal{T}_a(\lambda, \ell)$ in \eqref{E:opt} acting in $L^2(\R)$ or $C_b(\R)$ (with domain $H^{\alpha+2}(\R)$ or   $C_b^{\alpha+2}(\R)$),
for $\lambda\in\C$, $\Re(\lambda)>0$, and $\ell\in\R$, $\ell\neq0$. 
Since coefficients of $\mathcal T_a(\lambda,\ell)$  are periodic functions, using Floquet Theory, all solutions of \eqref{E:opt} in $L^2(\R)$ or $C_b(\R)$ are of the form $V(z)=e^{i\xi z}\Tilde{V}(z)$ where $\xi\in\left(-\frac12,\frac12\right]$ is the Floquet exponent and $\Tilde{V}$ is a $2\pi$-periodic function, see \cite{Haragus2008STABILITYEQUATION} for a similar situation. This replaces the study of invertibility of the operator $\mathcal T_a(\lambda,\ell)$ in $L^2(\R)$ or $C_b(\R)$ by the study of invertibility of a family of Bloch operators in $L^2(\mathbb{T})$ parameterized by the Floquet exponent $\xi$. 
We present the precise reformulation in the following lemma.
\begin{lemma}
The linear operator $\mathcal T_a(\lambda,\ell)$ is invertible in $L^2(\R)$ if and only if the linear operators
\begin{align*}\label{E:bloch}
\mathcal T_{a,\xi}(\lambda,\ell) = \lambda(\partial_z+i\xi)- (\partial_z+i\xi)^2(c-\calM_k+w)-\sigma\ell^2,
\end{align*}
acting in $L^2(\mathbb{T})$ with domain $H^{\alpha+2}(\mathbb{T})$ are invertible for all $\xi\in\left(-\frac12,\frac12\right]$.
\end{lemma}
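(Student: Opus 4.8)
The plan is to use Floquet--Bloch theory to decompose $L^2(\R)$ into a direct integral of Bloch spaces. First I would observe that the coefficients of $\mathcal T_a(\lambda,\ell)$ -- namely $c-\calM_k+w$ -- are $2\pi$-periodic in $z$ (here $\calM_k$ is the multiplier operator with symbol $m(k\,\cdot)$ obtained after the rescaling $x\to kz$), so that $\mathcal T_a(\lambda,\ell)$ commutes with the shift $z\mapsto z+2\pi$. The standard Bloch decomposition then gives a unitary equivalence
\[
L^2(\R)\;\cong\;\int_{(-1/2,1/2]}^{\oplus} L^2(\T)\,d\xi,\qquad
\mathcal T_a(\lambda,\ell)\;\cong\;\int_{(-1/2,1/2]}^{\oplus}\mathcal T_{a,\xi}(\lambda,\ell)\,d\xi,
\]
obtained concretely by writing $V(z)=e^{i\xi z}\widetilde V(z)$ with $\widetilde V\in L^2(\T)$ and conjugating: $e^{-i\xi z}\,\mathcal T_a(\lambda,\ell)\,e^{i\xi z}=\mathcal T_{a,\xi}(\lambda,\ell)$, which is exactly the formula in the statement since $e^{-i\xi z}\partial_z e^{i\xi z}=\partial_z+i\xi$ and the multiplication operator $c-\calM_k+w$ is conjugation-invariant up to the same shift in the Fourier multiplier $\calM_k$.

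Next I would argue the two implications. For the easy direction, if $\mathcal T_a(\lambda,\ell)$ is invertible on $L^2(\R)$, then for each fixed $\xi$ the operator $\mathcal T_{a,\xi}(\lambda,\ell)$ is invertible on $L^2(\T)$: given $g\in L^2(\T)$, the function $e^{i\xi z}g(z)$ lies in $L^2_{\mathrm{loc}}$ and more precisely, after multiplying by a suitable cutoff and using translation-invariance of the spectrum in $\xi$-fibers, one produces a preimage; the cleanest route is to invoke the general fact that a direct-integral operator $\int^\oplus A_\xi\,d\xi$ is invertible iff the family $\{A_\xi\}$ is invertible with uniformly bounded inverses, so invertibility of the whole forces invertibility of a.e. fiber, and then continuity of $\xi\mapsto\mathcal T_{a,\xi}(\lambda,\ell)$ in operator norm (the $\xi$-dependence is polynomial) upgrades "a.e. $\xi$" to "every $\xi$". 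For the converse, if every $\mathcal T_{a,\xi}(\lambda,\ell)$ is invertible, I would show the inverses are uniformly bounded in $\xi\in(-1/2,1/2]$: the map $\xi\mapsto\mathcal T_{a,\xi}(\lambda,\ell)^{-1}$ is continuous on the compact interval $[-1/2,1/2]$ (using $\mathcal T_{a,-1/2}$ and $\mathcal T_{a,1/2}$ agree up to a shift of Fourier index), hence bounded; reassembling the fiberwise inverses gives a bounded inverse of $\mathcal T_a(\lambda,\ell)$ on $L^2(\R)$.

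The main technical point -- and the step I expect to require the most care -- is the functional-analytic justification that the operators $\mathcal T_{a,\xi}(\lambda,\ell)$ are genuinely closed operators on $L^2(\T)$ with the stated domain $H^{\alpha+2}(\T)$ and that the direct-integral identification respects domains, because $\calM_k$ is an unbounded Fourier multiplier of order $\alpha$ (with $\alpha\ge -1$ possibly negative or large) rather than a nice differential operator; one must check that $(\partial_z+i\xi)^2(c-\calM_k+w)$ has the same principal part $-\partial_z^2\calM_k$ of order $\alpha+2$ for all $\xi$, so the domain is $\xi$-independent, and that the lower-order ($\xi$-dependent) terms are relatively bounded perturbations. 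Rather than redo this, I would simply cite the analogous Bloch-decomposition arguments already used for such nonlocal dispersive operators, namely \cite{Haragus2008STABILITYEQUATION}, which treats exactly this reformulation in a parallel setting, and note that the same proof applies verbatim here with $\calM_k$ in place of the operator there. The $C_b(\R)$ statement follows by the same decomposition, now using that bounded solutions on $\R$ correspond to Bloch functions with $\xi$ real but not necessarily producing an $L^2$ function, and invertibility in $C_b(\R)$ is likewise equivalent to fiberwise invertibility for all $\xi\in(-1/2,1/2]$.
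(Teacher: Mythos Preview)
Your proposal is correct and follows essentially the same approach as the paper: the paper does not give an independent proof but simply invokes the Floquet--Bloch representation $V(z)=e^{i\xi z}\widetilde V(z)$ (citing \cite{Haragus2008STABILITYEQUATION}) and then refers to \cite[Lemma~5.1]{Haragus2011TransverseEquation} for the detailed argument in an analogous setting. Your write-up is in fact more explicit than the paper's, spelling out the direct-integral decomposition, the conjugation identity $e^{-i\xi z}\partial_z e^{i\xi z}=\partial_z+i\xi$, the uniform-boundedness-of-inverses argument via continuity in $\xi$ over a compact interval, and the domain issue for the nonlocal multiplier $\calM_k$; but the underlying strategy and the ultimate appeal to the Haragus references are identical.
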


We refer to \cite[Lemma~5.1]{Haragus2011TransverseEquation} for a detailed proof in the similar situation.
The fact that the operators $\mathcal T_{a,\xi}(\lambda,\ell)$ act in $L^2(\mathbb{T})$ implies that these operators have only point spectrum. Note that $\xi=0$ corresponds to the periodic perturbations which we have already investigated, so now we would restrict ourselves to the case of $\xi\not=0$. The operator $\partial_z+i\xi$ is invertible in $L^2(\mathbb{T})$. Using this, we have the following result.
\begin{lemma}\label{lem:eq}
The operator $\mathcal T_{a,\xi}(\lambda,\ell)$ is not invertible in $L^2(\mathbb{T})$ for some $\lambda\in \C$ and $\xi\neq 0$ if and only if  $\lambda\in\operatorname{spec}(\mathcal A_a(\ell,\xi)))$, $L^2(\mathbb{T})$-spectrum of the operator,
\begin{align*}
    \mathcal{A}_a(\ell,\xi) := (\partial_z+i\xi)(c-\calM_k+w)+\sigma\ell^2(\partial_z+i\xi)^{-1}.
\end{align*}
\end{lemma}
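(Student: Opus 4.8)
The plan is to prove the stated equivalence by unwinding the definitions and using the stated invertibility of $\partial_z + i\xi$ on $L^2(\mathbb{T})$ for $\xi \neq 0$. First I would recall that $\mathcal T_{a,\xi}(\lambda,\ell)$ is a relatively compact perturbation of an operator with compact resolvent acting on $L^2(\mathbb{T})$, so its failure to be invertible is equivalent to $0$ being an eigenvalue, i.e., to the existence of a nonzero $V \in H^{\alpha+2}(\mathbb{T})$ with $\mathcal T_{a,\xi}(\lambda,\ell)V = 0$. Written out, this equation reads
\[
\lambda(\partial_z+i\xi)V = (\partial_z+i\xi)^2(c-\calM_k+w)V + \sigma\ell^2 V.
\]

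The key step is to factor $(\partial_z+i\xi)$ out of the equation. Since $\xi \neq 0$, the operator $\partial_z+i\xi$ is a bijection from $H^{s+1}(\mathbb{T})$ onto $H^s(\mathbb{T})$ for every $s$, and in particular $V \mapsto \lambda(\partial_z+i\xi)V - (\partial_z+i\xi)^2(c-\calM_k+w)V - \sigma\ell^2 V$ vanishes on a nonzero $V$ if and only if
\[
\lambda V - (\partial_z+i\xi)(c-\calM_k+w)V - \sigma\ell^2(\partial_z+i\xi)^{-1}V = 0,
\]
obtained by applying the bounded inverse $(\partial_z+i\xi)^{-1}$ to the whole identity (this is legitimate because $(\partial_z+i\xi)^{-1}$ is injective, so no solutions are created or destroyed, and the regularity $V \in H^{\alpha+1}(\mathbb{T})$ is automatic from the equation). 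Rearranging, this says precisely that $\mathcal{A}_a(\ell,\xi)V = \lambda V$, i.e., $\lambda \in \operatorname{spec}(\mathcal{A}_a(\ell,\xi))$ with the understanding that $\mathcal{A}_a(\ell,\xi)$ also has compact resolvent so its spectrum is pure point. Conversely, any eigenfunction of $\mathcal{A}_a(\ell,\xi)$ with eigenvalue $\lambda$ yields, upon applying $(\partial_z+i\xi)$, a nontrivial element of the kernel of $\mathcal T_{a,\xi}(\lambda,\ell)$, establishing the reverse implication.

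I expect the main technical point — really the only one requiring care — to be the domain bookkeeping: one must check that $(\partial_z+i\xi)^{-1}$ maps $L^2(\mathbb{T})$ boundedly into $H^1(\mathbb{T})$ with the correct mean-zero considerations (here there is no mean-zero constraint since $\xi \neq 0$ means $e^{inz}$ has $\partial_z + i\xi$ eigenvalue $i(n+\xi) \neq 0$ for all $n \in \mathbb{Z}$), and that the equivalence of ``$0$ is in the spectrum of $\mathcal T_{a,\xi}$'' with ``$0$ is an eigenvalue'' uses that $\calM_k$ has symbol growing like $|k|^\alpha$ so the principal part $(\partial_z+i\xi)^2\calM_k$ dominates and $\mathcal T_{a,\xi}(\lambda,\ell)$ is Fredholm of index zero. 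Since the analogous statement is carried out in detail in \cite[Lemma~5.1 and surrounding discussion]{Haragus2011TransverseEquation}, I would state these points and refer there rather than reproduce the functional-analytic estimates; the essential new content is just the algebraic factorization displayed above.
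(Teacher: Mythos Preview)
Your proposal is correct and follows exactly the approach the paper takes: the paper itself does not give a detailed proof but simply notes that $\partial_z+i\xi$ is invertible in $L^2(\mathbb{T})$ for $\xi\neq 0$, which is precisely the algebraic factorization you spell out. Your additional remarks on domain bookkeeping and the Fredholm property are accurate elaborations of what the paper leaves implicit (and, as you note, are handled in the cited reference \cite{Haragus2011TransverseEquation}).
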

Note that the operator $(\partial_z+i\xi)^{-1}$ becomes singular, as $\xi\to 0$, so replacing the study of the invertibility of $\mathcal T_{a,\xi}(\lambda,\ell)$ by the study of the spectrum of $\mathcal A_a(\ell,\xi)$ is not suitable for small $\xi$. To avoid this, we only study the spectrum of $\mathcal A_a(\ell,\xi)$ for $|\xi|>\epsilon>0$. Also, for $\xi\in\left(-\frac12,\frac12\right]$ and $\xi\not=0$, since the spectrum $\operatorname{spec}(\mathcal A_a(\ell,\xi))$ is symmetric with respect to the imaginary axis, and $\operatorname{spec}(\mathcal A_a(\ell,\xi))=\operatorname{spec}(-\mathcal A_a(\ell,-\xi))$ , we can restrict our study to $\xi\in\left(0,\frac12\right]$.

We will study the $L^2(\mathbb{T})$-spectra of linear operators $\mathcal A_a(\ell,\xi)$ for $|a|$ sufficiently small. It is straightforward to establish the estimate
\[
\|\mathcal A_a(\ell,\xi) - \mathcal A_0(\ell,\xi)\|= O(|a|)
\]as $a \to 0$ uniformly for $\xi\in\left(0,\frac12\right]$ in the operator norm. Therefore, In order to locate the spectrum of $\mathcal A_a(\ell,\xi)$, we need to determine the spectrum of $\mathcal A_0(\ell,\xi)$. A simple calculation yields that  
\[
\mathcal A_0(\ell,\xi)) e^{inz} = i\omega_{n,\ell,\xi}e^{inz}, \quad n\in\Z
\]
where
\[
\omega_{n, \ell, \xi} = (n+\xi)(m(k)-m(k(n+\xi)))-\frac{\sigma\ell^2}{n+\xi}.
\]
As in the previous section, the linear operator $\mathcal A_a(\ell,\xi)$ can be decomposed as
\[
\mathcal A_a(\ell) = J_\xi \L_a(\ell,\xi)
\]where
\[J_\xi = \partial_z+i\xi \quad \text{and} \quad \L_a(\ell,\xi) = c - \calM_k + w + \sigma \ell^2 (\partial_z+i\xi)^{-2}
\]
The operator $J_\xi$ is skew-adjoint, whereas the operator $\L_a(\ell)$ is self-adjoint. As defined in \eqref{e:krein}, the Krein signature, $\k_{n,\xi}$ of an eigenvalue $i\omega_{n,\ell,\xi}$ in $\operatorname{spec}(\mathcal A_0(\ell,\xi)))$ is
\begin{equation}\label{e:krsig}
\k_{n,\xi} = \operatorname{sgn}\left( m(k)-m(k(n+\xi))-\dfrac{\sigma \ell^2}{(n+\xi)^2}\right),\quad n \in \Z. 
\end{equation}
Therefore, a necessary condition for bifurcation of colliding eigenvalues, $i\omega_{p,\ell,\xi}$ and $i\omega_{q,\ell,\xi}$, from the imaginary axis is $(p+\xi_0)(q+\xi_0)< 0$, where $\xi_0$ is the value of the floquet exponent where eigenvalues $i\omega_{p,\ell,\xi}$ and $i\omega_{q,\ell,\xi}$ collide. As in the previous section, we split the analysis in this section in finite and short wavelength, and long wavelength transverse perturbations.

\subsection{Finite and short-wavelength transverse perturbations} We start the analysis of the spectrum of $\mathcal A_a(\ell,\xi)$ with the values of $\ell$ away from the origin, $|\ell|\geq \ell_0$, for some $\ell_0 > 0$. We further split the analysis into two cases depending on the value of $\sigma$ and nature of $m$. 
\subsubsection{$\boxed{(\sigma,m)=\oneu \text{ or } \mind}$}\label{sss:1} It is straightforward to verify that for $(\sigma,m)=\oneu $ and $\mind$, $\k_{n,\xi}$ is  negative for all $n \in \Z\setminus\{-1,0\}$ and positive for all $n \in \Z\setminus\{-1,0\}$, respectively, for all $k>0$ and $\xi\in\left(0,\frac12\right]$. However, Krein signatures of $i\omega_{-1,\ell,\xi}$ and $i\omega_{0,\ell,\xi}$ can be positive as well as negative depending upon the values of $k$, $\ell$ and $\xi$. Therefore, collision of $i\omega_{-1,\ell,\xi}$ and $i\omega_{0,\ell,\xi}$ with each other or with any other eigenvalue may lead to instability. Another straightforward calculation reveals that eigenvalues $i\omega_{-1,\ell,\xi}$ and $i\omega_{0,\ell,\xi}$ collide when
\begin{equation}\label{e:collision}
    \ell^2=\ell^2_c = \sigma \xi(1-\xi)[(1-\xi)(m(k)-m(k(1-\xi)))+\xi(m(k)-m(k\xi))] > 0
\end{equation}
for all $\xi\in\left(0,\frac12\right]$ while there are no collisions between  pairs $\{i\omega_{0,\ell,\xi}, i\omega_{-n,\ell,\xi}\}$, $n\in \N\backslash \{1\}$ and $\{i\omega_{-1,\ell,\xi}, i\omega_{m,\ell,\xi}\}$, $m\in \N$.
Therefore, we are left with only one pair which may bifurcate into potentially unstable eigenvalues which is $\{\omega_{-1,\ell,\xi},\omega_{0,\ell,\xi}\}$. We further perform perturbation calculations to obtain the following result.

\begin{lemma} \label{lem:nonperu1}
Assume that $\xi\in\left(0,\frac12\right]$ and consider $\ell^2_c$ given in \eqref{e:collision}.   
For $(\sigma,m)=\oneu $ or $\mind$ and $|a|$ sufficiently small, there exists $\varepsilon_a(\xi) = \xi^{3/2}(1-\xi)^{3/2}|a|+O(a^2)>0$ such that
\begin{enumerate}
    \item for $|\ell^2-\ell_c^2|\geq \varepsilon_a(\xi)$, the spectrum of $\mathcal A_a(\ell,\xi)$ is purely imaginary.
    \item for $|\ell^2-\ell_c^2|< \varepsilon_a(\xi)$, the spectrum of $\mathcal A_a(\ell,\xi)$ is purely imaginary, except for a pair of complex eigenvalues with opposite nonzero real parts.
\end{enumerate}
\end{lemma}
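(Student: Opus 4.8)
The plan is to mimic the strategy of Lemma~\ref{lem:peru}: reduce the spectral problem near the collision point to a $2\times2$ matrix eigenvalue problem via the Lyapunov--Schmidt / spectral-projection reduction, compute that matrix to sufficient order in $a$, and then extract the sign of the discriminant of the resulting characteristic polynomial as a function of $\varepsilon = \ell^2 - \ell_c^2$. First I would fix $\xi\in(0,\tfrac12]$ and observe that, by the discussion following \eqref{e:krsig}, the only pair of eigenvalues of $\mathcal A_0(\ell,\xi)$ that can leave the imaginary axis as $a$ grows is $\{i\omega_{-1,\ell,\xi}, i\omega_{0,\ell,\xi}\}$, and these collide exactly at $\ell^2 = \ell_c^2$ given by \eqref{e:collision}. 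So set $\ell^2 = \ell_c^2 + \varepsilon$ and let $i\omega$ denote the value of the doubly-degenerate eigenvalue of $\mathcal A_0(\ell_c,\xi)$, with orthonormal eigenfunctions $\{e^{-iz}, 1\}$ (note $1 = e^{i0z}$ lies in $L^2(\mathbb{T})$ here since we are in the Bloch setting with $\xi\neq0$, not in $L^2_0$).

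Next I would set up the eigenfunction expansions $\phi_{a,-1,\ell,\xi}(z) = e^{-iz} + a\phi_{-1,1} + O(a^2)$ and $\phi_{a,0,\ell,\xi}(z) = 1 + a\phi_{0,1} + O(a^2)$, use the expansion $\mathcal A_a(\ell,\xi) = \mathcal A_0(\ell,\xi) + a\,(\partial_z+i\xi)(\cos z) + a^2(\cdots) + O(a^3)$ inherited from \eqref{eq:expA} (with $\partial_z$ replaced by $\partial_z + i\xi$), and solve order by order for the correction terms. Unlike the $\Delta=3$ case in Lemma~\ref{lem:peru}, here the two colliding Fourier modes $e^{-iz}$ and $1$ differ by $1$, so the operator $a\,(\partial_z+i\xi)\cos z$ couples them already at order $a$; this is why the width $\varepsilon_a(\xi)$ is $O(|a|)$ rather than $O(|a|^3)$. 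The reduced $2\times2$ matrix $\mathbf B_a(\ell,\xi)$ representing $\mathcal A_a(\ell,\xi)$ on $\operatorname{span}\{\phi_{a,-1,\ell,\xi}, \phi_{a,0,\ell,\xi}\}$ will have the form
\[
\mathbf B_a(\ell,\xi) = \begin{pmatrix} i\omega + i\alpha_{-1}(\varepsilon) & i\beta\, a \\ i\gamma\, a & i\omega + i\alpha_0(\varepsilon)\end{pmatrix} + O(a^2),
\]
where the diagonal perturbations $\alpha_{-1}, \alpha_0$ are $O(\varepsilon)$ coming from the $\sigma\ell^2(\partial_z+i\xi)^{-1}$ term, and $\beta\gamma$ (the product of the off-diagonal coupling coefficients) is a definite, computable function of $\xi$ and $k$. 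Writing $\lambda = i\omega + i\mu$ and computing $\det(\mathbf B_a(\ell,\xi) - \lambda\mathbf I) = 0$ gives a quadratic in $\mu$ whose discriminant takes the shape $\operatorname{disc}_a(\varepsilon) = (c_1\varepsilon + O(a^2\varepsilon) + O(\varepsilon^2))^2 - 4\beta\gamma\, a^2 + O(a^3)$ for an explicit nonzero constant $c_1 = c_1(\xi,k)$. The claim then follows: $\operatorname{disc}_a(\varepsilon) < 0$ — forcing $\mu$, hence $\lambda$, off the imaginary axis in a complex-conjugate pair — precisely when $|c_1\varepsilon| < 2\sqrt{|\beta\gamma|}\,|a| + O(a^{3/2})$, i.e. when $|\ell^2 - \ell_c^2| < \varepsilon_a(\xi)$ with $\varepsilon_a(\xi) = 2\sqrt{|\beta\gamma|}\,|a|/|c_1| + O(a^2)$, which one then matches to the stated $\xi^{3/2}(1-\xi)^{3/2}|a| + O(a^2)$ by evaluating the constants; outside that window $\operatorname{disc}_a(\varepsilon)\geq0$, so $\mu\in\mathbb{R}$ and the spectrum stays imaginary. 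One also has to note that $\operatorname{disc}_a$ is real here because the reduced matrix is $i$ times a Hermitian-like matrix up to the controlled error, so the "$\varepsilon$ too small $\Rightarrow$ real part" conclusion is genuinely an alternative between real $\mu$ and purely imaginary $\mu$.

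The main obstacle I anticipate is twofold. First, the bookkeeping for the reduction is more delicate than in Lemma~\ref{lem:peru} because the coupling occurs at first order in $a$, so the order-$a$ eigenfunction corrections $\phi_{-1,1}, \phi_{0,1}$ genuinely feed back into the order-$a^2$ diagonal entries and cannot be ignored; one must be careful that the normalization/orthonormality conditions are imposed consistently so that no spurious contributions survive. Second, and more importantly, the constants $\beta,\gamma,c_1$ depend on $\xi$ through $m(k(1-\xi))$ and $m(k\xi)$ in a way that must be shown to produce exactly $\xi^{3/2}(1-\xi)^{3/2}$ in the leading coefficient of $\varepsilon_a(\xi)$ — this requires using the specific form of $\ell_c^2$ in \eqref{e:collision} and the chain-rule structure of $\partial_\ell$ of $\omega_{n,\ell,\xi}$ at the collision. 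I would handle this by writing $\omega_{-1,\ell,\xi} - \omega_{0,\ell,\xi}$ as a function of $\varepsilon$ near $\varepsilon=0$, reading off $c_1$ from its derivative, and verifying that the $\sqrt{|\beta\gamma|}/|c_1|$ combination collapses to the advertised power of $\xi(1-\xi)$; the positivity of $\varepsilon_a(\xi)$ is then automatic from $\xi\in(0,\tfrac12]$. The remaining claim that \emph{all other} eigenvalues stay on the imaginary axis for $|a|$ small is exactly the Krein-signature argument already recorded before the lemma (all $\k_{n,\xi}$, $n\neq -1,0$, have one sign), so no further work is needed there beyond invoking it.
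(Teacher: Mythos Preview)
Your proposal is correct and follows essentially the same route as the paper: reduce to the $2\times2$ matrix on the span of $\{e^{-iz},1\}$, compute the characteristic polynomial in $\mu=\lambda/i-\omega$, and read off the instability window from the sign of the discriminant. The paper arrives at precisely the matrix
\[
\mathbf{B}_a(\ell,\xi)=\begin{pmatrix} i\omega-\dfrac{i\sigma\varepsilon}{\xi-1} & \dfrac{i}{2}a\,\xi\\[1ex] \dfrac{i}{2}a(\xi-1) & i\omega-\dfrac{i\sigma\varepsilon}{\xi}\end{pmatrix}+O(|a|(|\varepsilon|+|a|)),
\]
giving $\operatorname{disc}_a(\varepsilon,\xi)=\varepsilon^2/(\xi^2(\xi-1)^2)+\xi(\xi-1)a^2+O(a^2(|\varepsilon|+a^2))$, and since $\xi(\xi-1)<0$ the threshold $\varepsilon_a(\xi)=\xi^{3/2}(1-\xi)^{3/2}|a|+O(a^2)$ drops out exactly as you predicted via $2\sqrt{|\beta\gamma|}/|c_1|$.

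Two small remarks. First, your anticipated ``main obstacle'' about order-$a$ eigenfunction corrections feeding back into order-$a^2$ diagonal entries is an overcaution here: because the off-diagonal coupling is already $O(a)$, the leading discriminant balance is $\varepsilon^2$ against $a^2$, and the $O(a^2)$ diagonal corrections land harmlessly in the error term. The paper simply asserts $\phi_{-1,1}=\phi_{0,1}=0$ by orthonormality and computes $\mathbf{B}_a$ only to $O(a)$; that is enough. Second, your written discriminant $(c_1\varepsilon)^2-4\beta\gamma a^2$ carries a sign slip (it should be $(c_1\varepsilon)^2+4\beta\gamma a^2$, negative because $\beta\gamma=\xi(\xi-1)/4<0$); the conclusion you draw is nevertheless the correct one.
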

\begin{proof}
There exists a curve $\ell=\ell_c$ given in \eqref{e:collision} along which \[\omega(\ell_c,\xi):=\omega_{-1,\ell_c,\xi} =\omega_{0,\ell_c,\xi}.\]
Furthermore,
\begin{align}
 \phi_{0,-1}(z) = e^{-i z}
 \quad \quad and \quad\quad
 \phi_{0,0}(z) = 1
\end{align}
forms the corresponding eigenspace for $\mathcal{A}_0(\ell_c,\xi)$ associated with the two eigenvalues. Let
\begin{align}
    i \omega(\ell_c,\xi) + i \mu_{a,\ell,-1}
    \quad \quad and \quad\quad
    i \omega(\ell_c,\xi) + i \mu_{a,\ell,0}
\end{align}
be the eigenvalues of $\mathcal{A}_a(\ell_c,\xi)$ bifurcating from $i\omega_{-1,\ell_c,\xi}$ and $i\omega_{0,\ell_c,\xi}$ respectively for $|a|$ and $|\ell-\ell_c|$ small. Let $\{\phi_{a,\ell,-1}(z), \phi_{a,\ell,0}(z)\}$ be the extended eigenspace associated with two bifurcating eigenvalues.
Following \cite{Creedon2021High-FrequencyApproach}, we can take,
\begin{align}\label{eq:eigg1}
    \phi_{a,-1,\ell}(z) =& e^{-iz}+a\phi_{-1}+O(a^2), \\
    \phi_{a,0,\ell}(z) =& 1+a\phi_{0}+O(a^2)\label{eq:eigg2}
\end{align}
Using constraint of orthonormality on the eigenfunctions $\phi_{a,\ell,-1}$ and $\phi_{a,\ell,0}$, we obtain
\[
\phi_{-1}=\phi_{0} = 0.
\]
To locate eigenvalues, we calculate matrix representations of $A_a(\ell_c,\xi)$ and identity operators on $\{\phi_{a,-1,\ell}(z), \phi_{a,0,\ell}(z)\}$ for  $|a|$ and $|\ell-\ell_c|$ small  and find that
\[
\mathbf{B}_a(\ell,\xi) = 
\begin{pmatrix}i\omega(\ell_c,\xi)-i\frac{\sigma\varepsilon}{\xi-1}&
\frac i2 a \xi
\\
\frac i2(\xi-1) a&i\omega(\ell_c,\xi)-i\frac{\sigma\varepsilon}{\xi}
\end{pmatrix}+O(|a|(|\varepsilon|+|a|)),
\]
where $\varepsilon=\ell^2-\ell_c^2$ and
$\mathbf{I}_a$ is the $2\times 2$ identity matrix. Solving the characteristic equation $\det(\mathbf{B}_a(\ell,\xi)-\lambda\mathbf{I}_a)=0$ for $\lambda$ of the form
\[
\lambda = i\omega(\ell_c,\xi) +i\mu,
\]
leads to the polynomial equation
\[
\mu^2+\mu\sigma\varepsilon\left(\frac{1}{\xi-1} + \frac{1}{\xi} +
O(a^2)\right)
-\frac14\xi(\xi-1)a^2+\frac{\varepsilon^2}{\xi(\xi-1)} +
  O(a^2(|\varepsilon|+a^2)) =0.
\]
A direct computation shows that the discriminant of this polynomial is
\[
\operatorname{disc}_a(\varepsilon,\xi) = 
\frac{\varepsilon^2}{\xi^2(\xi-1)^2} +\xi(\xi-1)a^2+
O(a^2(|\varepsilon|+a^2)).
\]For any $a$ sufficiently small there exists
\[
\varepsilon_a(\xi) = \xi^{3/2}(1-\xi)^{3/2}|a|+O(a^2)>0
\]
such that the two eigenvalues of $\mathcal A_a(\ell,\xi)$  are purely imaginary when $|\ell^2-\ell_c^2|\geq \varepsilon_a(\xi)$ and complex with opposite nonzero real parts when $|\ell^2-\ell_c^2|<\varepsilon_a(\xi)$, which proves the lemma.
\end{proof}
Figure~\ref{fig:lem4.3} depicts a collision of pair of eigenvalues in KP-ILW-I and KP-Whitham-II equations which lead to instability according to Lemma~\ref{lem:nonperu1}.

\begin{figure}%
    \centering
    \subfloat[\centering KP-ILW-I]{{\includegraphics[width=7cm]{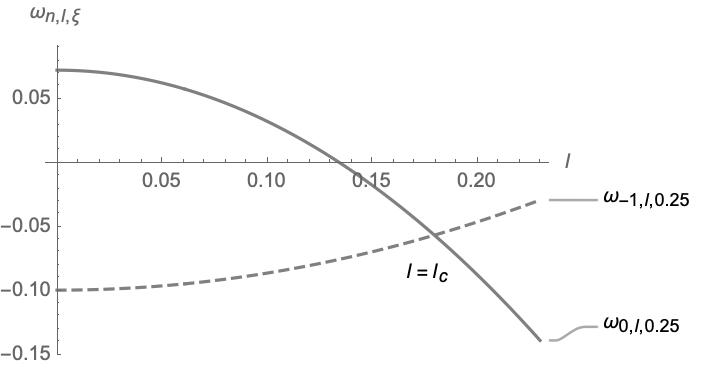} }}%
    \qquad
    \subfloat[\centering KP-Whitham-II]{{\includegraphics[width=7cm]{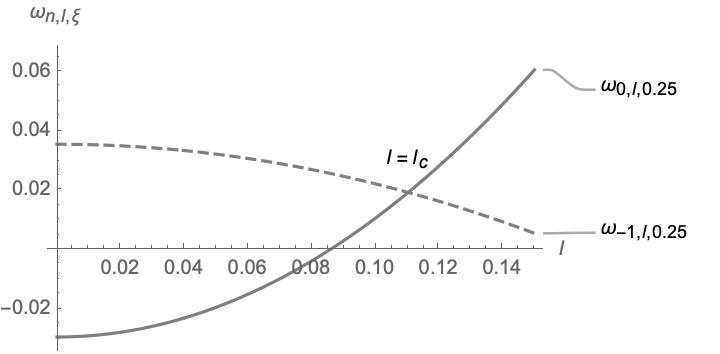} }}%
    \caption{Collision of eigenvalues at $\ell=\ell_c$ for $k=1$ and $\xi=0.25$.}%
    \label{fig:lem4.3}%
\end{figure}

\subsubsection{$\boxed{(\sigma,m)=\oned \text{ or } \minu}$}\label{sss:2}
In contrast to the Section~\ref{sss:1}, for $(\sigma,m)=\oned $ or $\minu$, there are infinitely many pairs of eigenvalues which collide with each other. A pair $\{\omega_{p,\ell,\xi},\omega_{-q,\ell,\xi}\}$ collide with each other at
\[
\ell^2 = \ell^2_{p,q} = \dfrac{\sigma (p+\xi)(q-\xi)}{p+q}((p+\xi)(m(k)-m(k(p+\xi)))+(q-\xi)(m(k)-m(k(q-\xi))))
\] 
for all $\xi\in (0,1/2]$, and for all $p\in \N\cup \{0\}$, $q\in \N$ except $(p,q)=(0,1)$.
A Krein signature analysis tells that for each pair of colliding eigenvalues; there are intervals of $\ell$ where they have opposite Krein signatures. Therefore, all such collisions may lead to instability. 
 An additional analysis is required in order to detect the bifurcating eigenvalues which indeed leave the imaginary axis and become unstable. In what follows, we examine two pairs $\{\omega_{-1,\ell,\xi},\omega_{1,\ell,\xi}\}$ and $\{\omega_{0,\ell,\xi},\omega_{-2,\ell,\xi}\}$ whose indices differ by two. 
Let $i\omega_{n,\ell,\xi}$ and $i\omega_{n+2,\ell,\xi}$ be such two eigenvalues for some $n\in \Z$. Assume that these eigenvalues collide at $\ell=\ell_c$, that is
\begin{align}
     0 \neq \omega_{n,\ell_c} = \omega_{n+2,\ell_c} = \omega \hspace{3px} (say).
\end{align}
Furthermore,
\begin{align}
 \phi_{0,n}(z) = e^{i n z}
 \quad \quad and \quad\quad
 \phi_{0,n+2}(z) = e^{i (n+2) z}
\end{align}
form a basis for the corresponding eigenspace for $\mathcal{A}_0(\ell_c,\xi)$ generated by the two eigenvalues. 
Let
\begin{align}
    i \omega + i \mu_{a,n}
    \quad \quad \text{and} \quad\quad
    i \omega + i \mu_{a,n+2}
\end{align}
be the eigenvalues of $\mathcal{A}_a(\ell_c,\xi)$ bifurcating from $i\omega_{n,\ell_c,\xi}$ and $i\omega_{n+2,\ell_c,\xi}$ respectively, for $|a|$ small. Let $\{\phi_{a,n}(z), \phi_{a,n+2}(z)\}$ be a orthonormal basis for the corresponding eigenspace. We assume the following expansions 
\begin{align}\label{eq:eiggg1}
    \phi_{a,n,\ell}(z) =& e^{inz}+a\phi_{n,1}+a^2\phi_{n,2}+O(a^3), \\
    \phi_{a,n+2,\ell}(z) =& e^{i(n+2)z}+a\phi_{n+2,1}+a^2\phi_{n+2,2}
      +O(a^3).\label{eq:eiggg2}
\end{align}
We use orthonormality of $\phi_{a,n,\ell}$ and $\phi_{a,n+2,\ell}$ to find that
\[
\phi_{n,1} = \phi_{n,2} = \phi_{n+2,1} = \phi_{n+2,2} = 0.
\]
Next, we compute the action of $\mathcal{A}_a(\ell,\xi)$ and identity operators on the extended eigenspace $\{\phi_{a,n,\ell}(z), \phi_{a,n+2,\ell}(z)\}$ for $|\ell - \ell_c|$ and $|a|$ small. We arrive at
\begin{equation*}
    \mathcal B_a(\ell) = \begin{pmatrix} i\omega-ia^2\dfrac{A_2}{2}(n+\xi)-\dfrac{i\sigma \varepsilon}{n+\xi} & ia^2\dfrac{A_2}{2}(n+2+\xi) \\ ia^2\dfrac{A_2}{2}(n+\xi) & i\omega-ia^2\dfrac{A_2}{2}(n+2+\xi)-\dfrac{i\sigma \varepsilon}{n+2+\xi}
    \end{pmatrix} + O(a^2(|\varepsilon|+|a|)).
\end{equation*}
where $|\varepsilon|=|\ell^2-\ell_c^2|$ and $\mathcal{I}_a$, the $2\times 2$ identity matrix, respectively.
Solving the characteristic equation $|\mathcal{B}_{a}(\ell)-(i \omega + i \mu) \mathcal{I}_{a}| = 0$ leads to
\begin{align*}
   |\mathcal{B}_{a}(\ell)-(i \omega + i \mu) \mathcal{I}_{a}|& = \mu^2  + \mu \left(\sigma \varepsilon \left(\dfrac{1}{n+\xi}+\dfrac{1}{n+2+\xi}\right)+\dfrac{a^2A_2}{2}((n+2+\xi)+(n+\xi))\right)\\&+\dfrac{\sigma^2 \varepsilon^2}{(n+\xi)(n+2+\xi)}+O(a^2(|\varepsilon|+|a^3|))=0.
\end{align*}
A direct computation shows that the discriminant of this quadratic in $\mu$ is
\begin{align*}
    \operatorname{disc}_a(\varepsilon) = \dfrac{4\sigma^2 \varepsilon^2}{(n+\xi)^2(n+2+\xi)^2}+a^4A_2^2(n+1+\xi)^2+O(a^2(|\varepsilon|+|a^3|))
\end{align*}
which implies that for $|\varepsilon|$  and $|a|$ sufficiently small, the leading term in the discriminant is always positive irrespective of the values of $n$, $\xi$, $\sigma$ and $m$. Therefore, we do not observe any instability for the $\Delta =2$ case by performing the perturbation calculation up to the fourth power of the amplitude parameter $a$. 
\begin{remark}
A remark along the lines of Remark~\ref{rem:Delta4} should hold true here for $\Delta\geq 3$. We do not present explicit calculations as it require higher power of $a$ in solution $w$.
\end{remark}



\subsection{Long wavelength transverse Perturbations}
We now consider the spectrum of $\mathcal A_a(\ell,\xi)$ for $|\ell|$ small. Recall that $\xi$ is away from the origin, and we have taken $|\xi|>\epsilon$ for some small but fixed $\epsilon>0$. Because of this, the collision at the origin and collisions away from the origin are well separated for $\mathcal{A}_0(\ell,\xi)$. This separation persists for small $|a|$ using perturbation arguments. More precisely, we have the following lemma. 
\begin{lemma}\label{l:dec}
For any $\ell$ and $a$ sufficiently small, the spectrum of $\mathcal A_a(\ell,\xi)$ decomposes as 
    \[
    \operatorname{spec}(\mathcal A_a(\ell,\xi)) = \operatorname{spec}_0(\mathcal A_a(\ell,\xi))\cup \operatorname{spec}_1(\mathcal A_a(\ell,\xi)),
    \]
    with 
    \[
dist(\operatorname{spec}_0(\mathcal A_a(\ell,\xi)),\operatorname{spec}_1(\mathcal A_a(\ell,\xi)))\geq|(m(k)-m(k(1+\epsilon))|>0.
\]
\end{lemma}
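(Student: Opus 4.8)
The plan is to establish the decomposition by first analyzing the unperturbed operator $\mathcal A_0(\ell,\xi)$ and then invoking standard analytic perturbation theory to transfer the conclusion to $\mathcal A_a(\ell,\xi)$ for small $|a|$. Recall from the computation above that $\mathcal A_0(\ell,\xi)e^{inz} = i\omega_{n,\ell,\xi}e^{inz}$ with $\omega_{n,\ell,\xi} = (n+\xi)(m(k)-m(k(n+\xi))) - \sigma\ell^2/(n+\xi)$, so the $L^2(\T)$-spectrum of $\mathcal A_0(\ell,\xi)$ consists of the purely imaginary eigenvalues $\{i\omega_{n,\ell,\xi}\}_{n\in\Z}$. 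The first step is to single out the eigenvalue $i\omega_{0,\ell,\xi}$, which collides with (or is near) the origin when $\ell\to 0$ since $\omega_{0,\ell,\xi} = \xi(m(k)-m(k\xi)) - \sigma\ell^2/\xi$ stays bounded and near a fixed value as $\ell\to 0$, while all other $i\omega_{n,\ell,\xi}$, $n\neq 0$, remain separated from it. Define $\operatorname{spec}_0(\mathcal A_0(\ell,\xi)) = \{i\omega_{0,\ell,\xi}\}$ and $\operatorname{spec}_1(\mathcal A_0(\ell,\xi))$ to be the rest.

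The second step is the quantitative separation estimate. Because $\xi\in(\epsilon,1/2]$ is fixed and bounded away from $0$, one has $|n+\xi| \geq 1+\epsilon$ for every $n\neq 0$ with $n\leq -2$ or $n\geq 1$; the closest such index to $0$ is $n=1$ (giving $n+\xi \in (1+\epsilon, 3/2]$) and $n=-1$ (giving $|n+\xi| = 1-\xi \in [1/2,1-\epsilon)$, but since $\xi \le 1/2$ the relevant worst case for the gap statement is driven by $n=1$). Using the strict monotonicity of $m$ for $k>0$ (Hypothesis~\ref{h:m} H3), the quantities $(n+\xi)(m(k)-m(k(n+\xi)))$ for $n\neq 0$ are bounded away from $\omega_{0,\ell,\xi}$ uniformly for $|\ell|$ small; a short estimate gives that, for $\ell$ small, $\operatorname{dist}\big(\{i\omega_{0,\ell,\xi}\}, \{i\omega_{n,\ell,\xi}: n\neq 0\}\big) \geq |m(k)-m(k(1+\epsilon))| - O(\ell^2) > 0$. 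Then one contracts the threshold slightly (or uses that the stated bound should be read with a fixed safety margin absorbed into the choice of $\epsilon$) to get the clean inequality in the statement for $\ell$ sufficiently small.

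The third step transfers this to $\mathcal A_a(\ell,\xi)$. Since $\|\mathcal A_a(\ell,\xi) - \mathcal A_0(\ell,\xi)\| = O(|a|)$ uniformly for $\xi\in(0,1/2]$ (established just above the statement), and the two pieces $\operatorname{spec}_0(\mathcal A_0(\ell,\xi))$ and $\operatorname{spec}_1(\mathcal A_0(\ell,\xi))$ are separated by a fixed positive distance independent of small $\ell$, the spectral subspaces are stable under the $O(|a|)$ perturbation: one encloses $\operatorname{spec}_0$ by a small fixed circle $\Gamma$ of radius, say, one third of the gap, and defines $\operatorname{spec}_0(\mathcal A_a(\ell,\xi))$ as the part of the spectrum inside $\Gamma$ and $\operatorname{spec}_1(\mathcal A_a(\ell,\xi))$ as the part outside. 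For $|a|$ small enough (uniformly in the admissible $\ell,\xi$), no spectrum crosses $\Gamma$, and each $\operatorname{spec}_j(\mathcal A_a(\ell,\xi))$ lies within $O(|a|)$ of $\operatorname{spec}_j(\mathcal A_0(\ell,\xi))$; shrinking the gap estimate by this $O(|a|)$ error preserves the stated bound. This is exactly the situation treated in \cite[Lemma~4.7]{Haragus2011TransverseEquation}, so one may cite that for the resolvent/Riesz-projection details. The main obstacle, and the only genuinely model-dependent point, is the uniform lower bound on the gap in step two: one must verify that $\omega_{0,\ell,\xi}$ cannot approach any $\omega_{n,\ell,\xi}$, $n\neq 0$, as $\ell\to 0$, which is where Hypothesis~\ref{h:m} H3 (strict monotonicity of $m$) is essential — without it the differences $m(k)-m(k(n+\xi))$ could vanish and the decomposition would fail.
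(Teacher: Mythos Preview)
Your overall strategy --- compute the unperturbed spectrum of $\mathcal A_0(\ell,\xi)$, isolate a finite cluster near the origin, bound its distance to the rest, and then invoke Riesz-projection perturbation theory (as in \cite[Lemma~4.7]{Haragus2011TransverseEquation}) to transfer the decomposition to $\mathcal A_a(\ell,\xi)$ --- is exactly what the paper has in mind. The paper does not give a detailed proof; it simply remarks before the lemma that ``the collision at the origin and collisions away from the origin are well separated for $\mathcal A_0(\ell,\xi)$'' and that ``this separation persists for small $|a|$ using perturbation arguments.'' So on the level of method you are aligned with the paper.

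However, your concrete identification of $\operatorname{spec}_0$ is wrong, and this makes step two fail as written. You set $\operatorname{spec}_0(\mathcal A_0(\ell,\xi))=\{i\omega_{0,\ell,\xi}\}$ and put \emph{every} other eigenvalue, including $i\omega_{-1,\ell,\xi}$, into $\operatorname{spec}_1$. But for $\xi\in(\epsilon,1/2]$ one has $|{-1}+\xi|=1-\xi\in[1/2,1-\epsilon)$, which is \emph{not} $\ge 1+\epsilon$; your own computation notices this and then dismisses it with ``the relevant worst case is driven by $n=1$,'' which is unjustified. With your split, the gap $|\omega_{0,0,\xi}-\omega_{-1,0,\xi}|=|\xi(m(k)-m(k\xi))+(1-\xi)(m(k)-m(k(1-\xi)))|$ need not dominate $|m(k)-m(k(1+\epsilon))|$ for general $m$ satisfying Hypothesis~\ref{h:m}, so the stated lower bound can fail. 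The correct choice is $\operatorname{spec}_0=\{i\omega_{0,\ell,\xi},\,i\omega_{-1,\ell,\xi}\}$: these are precisely the eigenvalues with $|n+\xi|<1$, while every $n\notin\{0,-1\}$ satisfies $|n+\xi|\ge 1+\epsilon$, from which the bound $|m(k)-m(k(1+\epsilon))|$ follows cleanly via the monotonicity of $m$ (H3). This two-dimensional cluster is also the natural continuation, as $\xi\to 0$, of the two eigenvalues $i\omega_{\pm1,0}$ that collide at the origin in the periodic case (cf.\ Lemma~\ref{l:prop}, where the analogous $\operatorname{spec}_0$ is two-dimensional). Once you enlarge $\operatorname{spec}_0$ in this way, your steps two and three go through as you outlined.
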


Let $\ell^\ast>0$ be the smallest positive value of $\ell$ for which a collision of eigenvalues of $\mathcal A_0(\ell,\xi)$ takes place. Note that such an $\ell^\ast$ exists because the collision at the origin takes place only for $\ell=0$, and other collisions are well separated from this. Now, for any $\ell$ with $|\ell|<\ell^\ast$, there are no collisions between the eigenvalues of $\mathcal A_0(\ell,\xi)$ since $|\xi|>\epsilon>0$. This persists for small values of $a$ using perturbation arguments, and we have the following lemma.  
\begin{lemma}
Assume that $\xi\in\left(0,\frac12\right]$. There exists $\ell^* > 0$ and $a^*>0$ such that the spectrum of $\mathcal A_a(\ell,\xi)$ is purely imaginary, for any $\ell$ and $a$ satisfying $|\ell|<\ell^*$ and $|a|<a^*$.
\end{lemma}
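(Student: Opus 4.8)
The plan is to combine the spectral decomposition of Lemma~\ref{l:dec} with the reflection symmetry of $\operatorname{spec}(\mathcal A_a(\ell,\xi))$ about the imaginary axis, which is built into the factorization $\mathcal A_a(\ell,\xi)=J_\xi\L_a(\ell,\xi)$ with $J_\xi$ skew-adjoint and $\L_a(\ell,\xi)$ self-adjoint. Fix $\xi$ with $|\xi|>\epsilon$ and let $\ell^\ast>0$ be as in the paragraph preceding the statement, namely the smallest positive $\ell$ at which two eigenvalues $i\omega_{n,\ell,\xi}$ of the (Fourier-diagonal, hence normal) operator $\mathcal A_0(\ell,\xi)$ coincide. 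First I would check that such an $\ell^\ast$ exists and is positive: the collision thresholds from Sections~\ref{sss:1} and \ref{sss:2} are explicit rational expressions in the mode indices whose magnitude grows with those indices (by Hypothesis~\ref{h:m} H2, which also forces $|\omega_{n,\ell,\xi}|\to\infty$ with consecutive high modes bounded apart), so only finitely many collision curves pass near $\ell=0$, while H3 keeps the finitely many low modes $i\omega_{n,\ell,\xi}$, $|n|$ small, mutually distinct for $\ell$ small. Hence for $|\ell|<\ell^\ast$ the eigenvalues $\{i\omega_{n,\ell,\xi}\}_{n\in\Z}$ are simple, purely imaginary, and separated by a gap $\delta>0$, and Lemma~\ref{l:dec} splits them into the two-element low part $\operatorname{spec}_0$ (the perturbations of $i\omega_{0,\ell,\xi}$ and $i\omega_{-1,\ell,\xi}$) and the remainder $\operatorname{spec}_1$, with the separation between the two parts persisting for small $|a|$.

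Since $\mathcal A_0(\ell,\xi)$ is normal and $\|\mathcal A_a(\ell,\xi)-\mathcal A_0(\ell,\xi)\|=O(|a|)$ uniformly, a Riesz-projection argument yields $a^\ast>0$ so that for $|a|<a^\ast$ and $|\ell|<\ell^\ast$ the spectral projection of $\mathcal A_a(\ell,\xi)$ onto each ball $B(i\omega_{n,\ell,\xi},\delta/2)$ has rank one and these balls exhaust $\operatorname{spec}(\mathcal A_a(\ell,\xi))$. Because $\omega_{n,\ell,\xi}\in\R$ gives $-\overline{i\omega_{n,\ell,\xi}}=i\omega_{n,\ell,\xi}$, the mirror $-\overline\lambda$ of any eigenvalue $\lambda\in B(i\omega_{n,\ell,\xi},\delta/2)$ --- which lies in the spectrum by the symmetry --- is again in that ball, so $-\overline\lambda=\lambda$ and $\Re\lambda=0$. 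If a more hands-on presentation is wanted, one may split along Lemma~\ref{l:dec}: on $\operatorname{spec}_0$ reduce $\mathcal A_a(\ell,\xi)$ to a $2\times2$ matrix $\mathbf{B}_a(\ell,\xi)$ on the associated spectral subspace exactly as in the proofs of Lemmas~\ref{lem:long} and \ref{lem:nonperu1}, whose two unperturbed eigenvalues $i\omega_{0,\ell,\xi}$, $i\omega_{-1,\ell,\xi}$ are distinct for $|\ell|<\ell^\ast$, so the perturbed ones stay simple and, by the $2\times2$ instance of the symmetry argument (equivalently, by showing the discriminant of the characteristic polynomial is negative), stay on the imaginary axis; on $\operatorname{spec}_1$ note that for $|\ell|$ small and $n\notin\{-1,0\}$ one has $|n+\xi|>1$, so by evenness and strict monotonicity of $m$ the Krein signatures $\k_{n,\xi}$ in \eqref{e:krsig} all share one sign (negative if $m$ is increasing, positive if decreasing), whence no colliding pair there can bifurcate off the imaginary axis under the perturbation, exactly as at the end of the proof of Lemma~\ref{lem:long}.

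The main obstacle is everything packed into ``$\ell^\ast$ exists and the gap $\delta$ stays under control''. The growth hypothesis H2 is needed to prevent collision curves of $\mathcal A_0(\ell,\xi)$ from accumulating at $\ell=0$ and to keep consecutive high modes apart; and the restriction $|\xi|>\epsilon$ is essential because $i\omega_{0,\ell,\xi}$ and $i\omega_{-1,\ell,\xi}$ both tend to $0$ as $\xi\to 0$ and coalesce there, which is precisely the periodic long-wavelength collision at the origin treated in Section~\ref{sec:perperturb}. Staying a fixed distance from $\xi=0$ keeps $\ell^\ast$, the gap between these two low modes, and the separation constant $|m(k)-m(k(1+\epsilon))|$ of Lemma~\ref{l:dec} all bounded below, so that $\ell^\ast$ and $a^\ast$ can be chosen to work on the full stated range of $\xi$.
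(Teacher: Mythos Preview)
Your proposal is correct and follows the same logic as the paper: for $|\ell|<\ell^\ast$ the unperturbed eigenvalues of $\mathcal A_0(\ell,\xi)$ are all simple, this persists under the $O(|a|)$ perturbation, and the symmetry of the spectrum about the imaginary axis then pins each simple eigenvalue to $i\R$. The paper in fact offers no proof beyond the single sentence ``This persists for small values of $a$ using perturbation arguments'' preceding the lemma, so your Riesz-projection/rank-one argument and the careful discussion of why $\ell^\ast>0$ exists and can be taken uniform for $|\xi|>\epsilon$ simply flesh out what the paper leaves implicit; the alternative Krein-signature treatment of $\operatorname{spec}_1$ you sketch is also consistent with how the paper handles the analogous step at the end of Lemma~\ref{lem:long}.
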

\section{Proof of main results and applications}\label{sec:app}
We shall prove theorem \ref{t:2} and \ref{t:1} by using all the results obtained in Sections~\ref{sec:perperturb} and ~\ref{sec:nonperperturb}.
\begin{proof}[\underline{Proof of Theorem \ref{t:2}}]
We have assumed that $2\pi/k$-periodic traveling wave solution $u(x,y,t)=w(k(x-ct))$ of \eqref{e:gkp} is a stable solution of the one-dimensional equation \eqref{e:gW} where $w$ and $c$ are as in \eqref{e:expptw}. Lemma \ref{lem1} says that the spectrum of $\mathcal A_a(\ell)$ is purely imaginary if $(\sigma,m)$ is $\oneu$ or $\mind$ for all $|\ell|>\ell_a>0$, which implies that the small amplitude periodic traveling waves \eqref{e:expptw} of \eqref{e:gkp} are transversely stable with respect to two-dimensional perturbations which are periodic in the direction of propagation of the wave and of finite or short wavelength in the transverse direction if $\sigma$ and $m$ in \eqref{e:gkp} satisfy $(\sigma,m)=\oneu$ or $\mind$. Lemma \ref{lem:long} says that the spectrum of $\mathcal A_a(\ell)$ is purely imaginary if $(\sigma,m)$ is $\oned$ or $\minu$ for all $|\ell|<|\ell^\ast_a|$, which implies that the small amplitude periodic traveling waves \eqref{e:expptw} of \eqref{e:gkp} are transversely stable with respect to two-dimensional perturbations which are periodic in the direction of propagation of the wave and of long wavelength in the transverse direction if $\sigma$ and $m$ in \eqref{e:gkp} satisfy $(\sigma,m)=\oned$ or $\minu$.
\end{proof}

\begin{proof}[\underline{Proof of Theorem \ref{t:1}}]
Lemma \ref{lem:long} says that there exist $\ell_a$ such that the spectrum of $\mathcal A_a(\ell)$ is purely imaginary, except for a pair of simple real eigenvalues with opposite signs if $(\sigma,m)$ is $\oneu$ or $\mind$ for all $|\ell^2|<|\ell_a^2|$, which implies that the small amplitude periodic traveling waves \eqref{e:expptw} of \eqref{e:gkp} are transversely unstable with respect to two-dimensional perturbations which are periodic in the direction of propagation and of long wavelength in the transverse direction if $(\sigma,m)=\oneu$ or $\mind$. Moreover, Lemma \ref{lem:nonperu1} provide an interval of finite wavenumbers in the transverse direction for which the spectrum of $\mathcal A_a(\ell,\xi)$ have a pair of complex eigenvalues with opposite nonzero real parts when $(\sigma,m)=\oneu$ or $\mind$. These findings imply that the small amplitude periodic traveling waves \eqref{e:expptw} of \eqref{e:gkp} are transversely unstable with respect to two-dimensional perturbations which are non-periodic in the direction of propagation and of finite wavelength in the transverse direction if $(\sigma,m)=\oneu$ or $\mind$.
\end{proof}

We discuss implications of Theorem~\ref{t:2} and Theorem~\ref{t:1} on KP-fKdV, KP-BO, KP-ILW, and KP-Whitham equations.

\subsection{KP-fKdV and KP-BO Equations}\label{ss:1}
The KP-fKdV equation is obtained from \eqref{e:gkp} by taking
\[
m(k)= 1+|k|^\beta, \qquad \beta>1/2
\]  
The symbol $m(k)$ clearly satisfies Hypotheses~\ref{h:m}~$H1$, $H2$ ($\alpha=\beta$, $C_1=1$, and $C_2=2$), and $H3$ ($m$ is strictly increasing for $k>0$). The two-parameter family of periodic solutions can be obtained from \eqref{e:expptw} and $\eqref{e:A_0A2c2}$ by replacing $m(k)$ with $1+|k|^\beta$. We have obtained transverse stability and instability of these solutions in Corollary \ref{c:fkdv} using Theorems~\ref{t:2} and \ref{t:1}. Note that KP-BO equation corresponds to KP-fKdV equation with $\beta=1$. Therefore, Corollary~\ref{c:fkdv} hold true for the KP-BO equation as well.

For $\beta=2$, KP-fKdV equation reduces to the KP equation \eqref{e:kp}. As results in \cite{Haragus2008OnSystems,Bottman2009KdVStable} show that all periodic traveling waves of the KdV equation are spectrally stable in $L^2(\mathbb{T})$, from Corollary~\ref{c:fkdv}, small-amplitude periodic traveling waves \eqref{e:expptw} of KP-I (and KP-II resp.) are transversely stable with respect to two-dimensional perturbations which are periodic in the direction of propagation of the wave and of finite or short-wavelength (and long-wavelength resp.) in the transverse direction. These stability results agree with results in \cite{HLP17, Haragus2011TransverseEquation, MD88, Johnson2010TransverseEquation}. The transverse instability results obtained for KP-I in Corollary~\ref{c:fkdv} agrees with \cite{Haragus2011TransverseEquation}. 




\subsection{KP-ILW Equation}\label{ss:3}
The KP-ILW equation is obtained from \eqref{e:gkp} by taking,
\[
m(k)= k \coth{k}
\]
The symbol $m(k)$ satisfies Hypotheses~\ref{h:m} $H1$, $H2$ ($\alpha=2$, $C_1=1$, and $C_2=2$), and $H3$ ($m$ is strictly increasing for $k>0$). 
The two-parameter family of periodic solutions can be obtained from \eqref{e:expptw} and $\eqref{e:A_0A2c2}$ by replacing $m(k)$ with $k \coth{k}$. We have discussed the transverse stability and instability of these solutions in Corollary \ref{c:ilw} by using Theorem \ref{t:2} and \ref{t:1}.

\subsection{KP-Whitham Equation}
The KP-Whitham equation is obtained from \eqref{e:gkp} by taking,
\[
m(k)= \sqrt{\frac{\tanh k}{k}} 
\]
The symbol $m(k)$ satisfies Hypotheses~\ref{h:m} $H1$, $H2$ with $\alpha= -\frac{1}{2}$, $C_1=1$, and $C_2=2$, and $H3$ as $m$ is strictly decreasing for $k>0$. 
The two-parameter family of periodic solutions can be obtained from \eqref{e:expptw} and $\eqref{e:A_0A2c2}$ by replacing $m(k)$ with $\sqrt{\frac{\tanh k}{k}}$. We have discussed the transverse stability and instability of these solutions in Corollary \ref{c:whitham} by using Theorem \ref{t:2} and \ref{t:1}.

\section*{Acknowledgement}
Bhavna and AKP are supported by the Science and Engineering Research Board (SERB), Department of Science and Technology (DST), Government of India under grant 
SRG/2019/000741. Bhavna is also supported by Junior Research Fellowships (JRF) by University Grant Commission (UGC), Government of India. AK is supported by JRF  by Council of Scientific and Industrial Research (CSIR), Government of India.

\bibliographystyle{amsalpha}
\bibliography{references.bib}
\end{document}